\newcommand{\calo}{\mathcal O}
\newcommand{\bb}{b}
\newcommand{\bc}{c}
\newcommand{\be}{e}
\newcommand{\bq}{q}
\newcommand{\br}{r}
\newcommand{\bt}{t}
\newcommand{\bv}{v}
\newcommand{\bx}{x}
\newcommand{\by}{y}
\newcommand{\zero}{0}
\newcommand{\conj}{\overline}
\newcommand{\C}{\mathbb C}
\newcommand{\ds}{\displaystyle}
\newcommand{\ts}{\textstyle}
\newtheorem{experiment}[theorem]{Experiment}
\title{Computing several eigenvalues of nonlinear eigenvalue problems by selection\thanks{%
Version \today. \\
MH has been supported by an NWO Vidi research grant.
BP has been supported in part by the Slovenian Research Agency (grant P1-0294)
and by an NWO visitor's grant.}}
\author{Michiel~E.~Hochstenbach
\and Bor~Plestenjak}
\institute{M.E.~Hochstenbach \at
Department of Mathematics and Computer Science,
TU Eindhoven, PO Box 513, 5600 MB, The Netherlands,
\url{http://www.win.tue.nl/~hochsten}.
\and
B.~Plestenjak \at
IMFM and Faculty of Mathematics and Physics, University of Ljubljana, Jadranska 19,
SI-1000 Ljubljana, Slovenia, \email{bor.plestenjak@fmf.uni-lj.si}.
}
\authorrunning{Hochstenbach and Plestenjak}
\date{}
\begin{document}
\maketitle

\begin{abstract}
Computing more than one eigenvalue for (large sparse)
one-parameter polynomial and general nonlinear eigenproblems, as well as for
multiparameter linear and nonlinear eigenproblems, is a much harder task
than for standard eigenvalue problems.
We present simple but efficient selection methods based on divided differences to do this.
Selection means that the approximate eigenpair is picked
from candidate pairs that satisfy a certain suitable criterion.
The goal of this procedure is to steer the process away from already detected pairs.
In contrast to locking techniques, it is not necessary to keep converged
eigenvectors in the search space, so that the entire search space may be devoted
to new information.
The selection techniques are applicable to many types of matrix eigenvalue problems;
standard deflation is feasible only for linear one-parameter problems.
The methods are easy to understand and implement.
Although the use of divided differences is well known in the context of nonlinear
eigenproblems, the proposed selection techniques are new for one-parameter problems.
For multiparameter problems, we improve on and generalize our previous work.
We also show how to use divided differences in the framework of homogeneous
coordinates, which may be appropriate for generalized eigenvalue problems
with infinite eigenvalues.

While the approaches are valuable alternatives for one-parameter nonlinear
eigenproblems, they seem the only option for multiparameter problems.

\keywords{
Computing several eigenvalues \and selection \and divided difference
\and deflation \and locking \and homogeneous coordinates \and
quadratic eigenvalue problem \and
polynomial eigenvalue problem \and
nonlinear eigenvalue problem \and
multiparameter eigenvalue problem \and
subspace method \and
Jacobi--Davidson}
\subclass{65F15 \and 65F50 \and 15A18 \and 15A69}
\end{abstract}

\section{Introduction}
\label{sec:intro}
In large sparse matrix eigenvalue problems, a common task is to compute a few eigenvalues
closest to a given target, largest in magnitude, or rightmost in the complex plane.
If we have already (approximately) computed a number of eigenpairs, and would
like to compute a new pair, it is of importance to avoid convergence
to one of the previously computed pairs.

Let $A \in \C^{n \times n}$. For the standard eigenvalue problem
\begin{equation}
A \bx = \lambda \bx,
\label{ep}
\end{equation}
avoidance of previous vectors can be conveniently achieved by computing Schur vectors instead of eigenvectors.
This technique is based on the Schur decomposition $AQ = QR$ for
a matrix $A$, where $Q$ is unitary and $R$ is upper triangular.
If $(\lambda_1, \bq_1)$, \dots, $(\lambda_d, \bq_d)$ are Schur pairs
computed earlier in the process
and $Q_d = [\bq_1 \cdots \bq_d]$, then
$(I-Q_dQ_d^*) \, A \, (I-Q_dQ_d^*)$
has the same Schur pairs as $A$, except for $\lambda_1$, \dots, $\lambda_d$
which are replaced by zero eigenvalues. In a subspace method, the search space may
then be kept orthogonal to $\bq_1$, \dots, $\bq_d$ so that the subspace method
does not notice these zero eigenvalues.
For the generalized eigenvalue problem (GEP) $
A\bx = \lambda B\bx,
$
where $B \in \C^{n \times n}$,
the generalized Schur decomposition for matrix pencils may be exploited
in a similar way.

The Jacobi--Davidson QR (JDQR \cite{SBFV96}) method for \eqref{ep}
and Jacobi--Davidson QZ (JDQZ \cite{SBFV96}) method for GEP
are both examples of methods that are based on the described strategies.
Modified deflation techniques are available for other types of linear eigenvalue
problems such as the (generalized) singular value problem \cite{Hoc01, Hoc09}.

In this paper we discuss a new approach to find several eigenvalues for the
regular nonlinear eigenvalue problem (NEP)
\begin{equation}
\label{nep}
F(\lambda) \, \bx = \zero,
\end{equation}
where $F(\lambda)$ is an $n\times n$ matrix, whose elements are
analytic functions of the complex parameter $\lambda$.
The regularity means that $\det(F(\lambda))$ does not vanish identically.
As a special case, we will consider the polynomial eigenvalue problem (PEP)
\begin{equation}
\label{pep}
P(\lambda) \, \bx=(\lambda^m A_m + \dots + \lambda A_1 + A_0) \, \bx = \zero,
\end{equation}
where all matrices are $n \times n$.
In some applications, the leading matrix $A_m$ may be singular, and eigenvalues
may be infinite. For this reason, it may be beneficial to consider homogeneous coordinates,
which we will do in Section~\ref{sec:homo}.
Moreover, because of the practical importance, as well as for convenience
of presentation, we will focus in particular on the quadratic eigenvalue problem (QEP)
\begin{equation}
\label{qep}
Q(\lambda) \, \bx=(\lambda^2 A + \lambda B + C) \, \bx = \zero.
\end{equation}
Already for the QEP no deflation procedure comparable to those for the standard
and generalized eigenvalue problems is known, which is natural in view of the
existence of $2n$ eigenpairs.
This implies that if we compute eigenvalues with, for instance,
the Jacobi--Davidson method \cite{SBFV96}, we may find the same eigenvalue
again without preventive measures.
It is possible to linearize the QEP or PEP to a GEP, for which a deflation procedure is possible.
However, a clear drawback of this is that linearization increases the dimension
of the problem by a factor $m-1$ (see also the comments in Section~\ref{sec:compare}).
Also, the mathematical properties of linearizations are interesting but not
straightforward; see, e.g., \cite{HMT13}.
For the NEP, linearizations are even more involved.
It is therefore relevant to study techniques that can be directly applied
to the problem at hand.

Several strategies have been mentioned to compute several eigenvalues of nonlinear eigenproblems.
One option to find more than one eigenvalue is {\em locking},
which was studied for the QEP by Meerbergen \cite{Mee01}.
The essence of this approach is to carry out no further computations on sufficiently
converged Schur vectors, and retain them in the search space.
While this method may be effective, a disadvantage
is that the size of the search space grows steadily.
In \cite{Lin86, GLW95a, GLW95b}, 
a technique called {\em nonequivalence deflation}
has been proposed, which replaces the original problem by another.
Kressner \cite{Kre09} has developed a block method, while Effenberger \cite{Eff13}
proposes a deflation strategy for nonlinear eigenproblems.
Several methods have been discussed and compared in \cite{Eff13}
and by G\"{u}ttel and Tisseur \cite{GTi17}.
Some further comparisons can be found in Section~\ref{sec:compare}.

In this paper, we propose an alternative simple and elegant strategy:
computing several eigenvalues by {\em selection}. In each iteration we pick
an approximate eigenpair from (Ritz) pairs that meet a certain selection criterion.
This new strategy is particularly simple to comprehend and implement.
We present several selection methods to compute more than one eigenpair of linear and nonlinear,
and one-parameter and multiparameter eigenvalue problems (MEPs).
We present various variants for the QEP and PEP, and for linear and nonlinear multiparameter
eigenvalue problems.
The selection criteria can be used for all types of eigenvalue problems,
provided that expressions for the divided difference and derivative of the problem
are available; see \eqref{critNEP}.

This work contains contributions in three directions.
Although we have already successfully used some of these selection techniques
in our work on linear multi-parameter eigenvalue problems 
(\cite{HPl02, HKP05}, followed by nonlinear two-parameter eigenproblems
\cite{HMP15, Ple16} and linear three-parameter eigenvalue problems \cite{HMMP19} very recently),
we will present an improvement on these criteria for these problems.
Secondly, to the best of our knowledge, the use of selection techniques
to compute several eigenvalues in the form as described in this paper
is new for one-parameter nonlinear eigenproblems:
the QEP \eqref{qep}, PEP \eqref{pep}, and general NEP \eqref{nep}.
Thirdly, these problems may have infinite eigenvalues when the leading coefficient matrix is singular.
Therefore, methods exploiting homogeneous coordinates may be attractive for these problems
(cf., e.g., \cite{DTi03, HNo07}).
We therefore introduce divided differences in homogeneous coordinates,
which is nontrivial, and new to the best of our knowledge.

Finally, this paper is also meant to serve as an overview paper on selection techniques.
We hope that it will inform about, popularize, and facilitate the use of
these effective and easy-to-implement methods for eigenvalue problems.

There are various subspace expansion methods for eigenvalue problems.
In this paper we will focus on the Jacobi--Davidson method.
However, we want to stress that there are several other options to perform
a subspace expansion, such as nonlinear Arnoldi \cite{Vos04} for NEPs,
and Krylov type methods for MEPs \cite{MPl15}.
The selection techniques operate independently of the expansion of the subspace.

The rest of this paper has been organized as follows.
In Section~\ref{sec:sel} we introduce a new selection criterion for
nonlinear one-parameter eigenvalue problems.
Eigenvalue problems involving matrices which are not full rank
may have infinite eigenvalues. To deal with these in a consistent framework,
homogeneous coordinates are the proper viewpoint; this is studied in
Section~\ref{sec:homo}.
Section~\ref{sec:mep} focuses on the use of selection criteria for
linear and nonlinear MEPs, our original motivation to study these techniques.
We end with some numerical experiments and conclusions in
Sections~\ref{sec:num} and \ref{sec:concl}.

\section{Selection for nonlinear one-parameter eigenvalue problems}
\label{sec:sel}
We first introduce some basic notation and facts.
The pair $(\lambda, \bx)$ is an eigenpair if $F(\lambda)\bx = \zero$
for a nonzero vector $\bx$. If $\by^*F(\lambda)= \zero$ for a nonzero $\by$,
then $\by$ is a left eigenvector for the eigenvalue $\lambda$.
We assume that both $\bx$ and $\by$ have unit norm.
We say that $\lambda_0$ is a simple eigenvalue when
$f(\lambda) := \det(F(\lambda))$ has a simple zero at $\lambda = \lambda_0$.
Neumaier~\cite{Neum85} proves the following proposition about the left
and the right eigenvectors of a simple eigenvalue. The same result
with an alternative proof is presented in \cite{Schr08}.

\begin{proposition}
\label{prop:Fprime}
For the nonlinear eigenvalue problem $F(\lambda) \, \bx= \zero$ the following
are equivalent:
\begin{enumerate}
\item $f(\lambda) = \det(F(\lambda))$ has a simple zero at $\lambda=\lambda_0$;
\item $F(\lambda_0)$ has corank 1, and for right and left eigenvectors $\bx$ and $\by$
corresponding to $\lambda_0$, we have
$\by^*F'(\lambda_0) \, \bx \ne 0$.
\end{enumerate}
\label{prop:neumaier}
\end{proposition}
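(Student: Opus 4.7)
The plan is to base everything on Jacobi's formula for the derivative of a determinant:
\[
f'(\lambda) = \frac{d}{d\lambda}\det(F(\lambda)) = \mathrm{tr}\bigl(\mathrm{adj}(F(\lambda))\, F'(\lambda)\bigr),
\]
together with the standard identity $F(\lambda)\,\mathrm{adj}(F(\lambda)) = \mathrm{adj}(F(\lambda))\, F(\lambda) = \det(F(\lambda))\, I$. With these two facts the whole statement reduces to understanding the structure of the adjugate $\mathrm{adj}(F(\lambda_0))$ when $F(\lambda_0)$ is singular.

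For the implication (1)$\Rightarrow$(2), I would first argue that corank of $F(\lambda_0)$ is exactly $1$. Indeed, if $F(\lambda_0)$ had corank at least $2$, then every $(n-1)\times(n-1)$ minor of $F(\lambda_0)$ would vanish, hence $\mathrm{adj}(F(\lambda_0)) = 0$, and Jacobi's formula would give $f'(\lambda_0) = 0$, contradicting the simplicity of the zero. Hence $\mathrm{corank}(F(\lambda_0)) = 1$ and $\mathrm{adj}(F(\lambda_0)) \neq 0$. Next, from $F(\lambda_0)\,\mathrm{adj}(F(\lambda_0)) = 0$ each column of $\mathrm{adj}(F(\lambda_0))$ lies in the one-dimensional kernel spanned by $\bx$; applying the dual identity shows each row is a multiple of $\by^*$. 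Therefore $\mathrm{adj}(F(\lambda_0)) = c\, \bx\by^*$ for some nonzero scalar $c$. Jacobi's formula then yields $f'(\lambda_0) = c\, \by^*F'(\lambda_0)\,\bx$, which is nonzero by assumption, so $\by^*F'(\lambda_0)\,\bx \ne 0$.

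For the converse (2)$\Rightarrow$(1), since $F(\lambda_0)$ has corank $1$ we have $f(\lambda_0) = 0$, and by the same rank-one structure argument $\mathrm{adj}(F(\lambda_0)) = c\, \bx\by^*$ with $c \ne 0$. Jacobi's formula again gives $f'(\lambda_0) = c\, \by^*F'(\lambda_0)\,\bx \ne 0$, so $\lambda_0$ is a simple zero of $f$.

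The main obstacle is the rank-one description of $\mathrm{adj}(F(\lambda_0))$ together with the fact that the scalar $c$ is nonzero; this requires a short but careful argument using the cofactor expansion (some nonzero $(n-1)\times(n-1)$ minor exists precisely because corank equals one). Everything else is bookkeeping via Jacobi's identity, so the proof is short once these ingredients are in place.
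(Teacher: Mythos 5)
Your proof is correct. Note first that the paper does not actually prove this proposition: it is quoted from Neumaier \cite{Neum85} (with an alternative proof in \cite{Schr08}), so there is no in-paper argument to compare against. Judged on its own, your adjugate-based route is sound and complete. The two pillars --- Jacobi's formula $f'(\lambda)=\mathrm{tr}\bigl(\mathrm{adj}(F(\lambda))\,F'(\lambda)\bigr)$ and the identity $F\,\mathrm{adj}(F)=\mathrm{adj}(F)\,F=\det(F)\,I$ --- are exactly what is needed, and you handle the one genuinely delicate point correctly: when $\mathrm{corank}\,F(\lambda_0)=1$, some $(n-1)\times(n-1)$ minor is nonzero, so $\mathrm{adj}(F(\lambda_0))\ne 0$, while the two annihilation identities force its columns into $\mathrm{span}\{\bx\}$ and its rows into $\mathrm{span}\{\by^*\}$, giving $\mathrm{adj}(F(\lambda_0))=c\,\bx\by^*$ with $c\ne 0$; the trace identity $\mathrm{tr}(\bx\by^*M)=\by^*M\bx$ then converts Jacobi's formula into $f'(\lambda_0)=c\,\by^*F'(\lambda_0)\,\bx$, from which both implications follow (the corank-$\ge 2$ case being excluded in the forward direction because it would make the adjugate, and hence $f'(\lambda_0)$, vanish). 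This is essentially the classical argument for this equivalence, and analyticity of $F$ guarantees that $f$ and $f'$ are well defined, so nothing is missing.
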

\smallskip

\noindent
The divided difference for $F$ is defined as
\begin{equation}
\label{divdif}
F[\lambda,\mu]:=\left\{\begin{matrix}
\ds \frac{F(\lambda)-F(\mu)}{\lambda-\mu} & {\rm if}\ \lambda \ne \mu, \\
& \\[-2mm]
F'(\lambda) & {\rm if}\ \lambda=\mu,\end{matrix}\right.
\end{equation}
which is continuous in both variables $\lambda$ and $\mu$.
An alternative way to denote this without distinction of cases is
$F[\lambda,\mu] = \ds \lim_{\mu_1 \to \mu}
\ts \frac{F(\lambda)-F(\mu_1)}{\lambda-\mu_1}$.
We will use similar expressions for convenience and brevity in Section~\ref{sec:mep}.
This divided difference enjoys the following two key properties that can be
used in the selection process.
First, it is easy to see that\begin{equation}
\label{zero}
\by_i^* \, F[\lambda_i,\lambda_j] \, \bx_j = 0,
\end{equation}
when $\bx_j$ is the right eigenvector for $\lambda_j$, and
$\by_i$ is the left eigenvector for a different eigenvalue $\lambda_i \ne \lambda_j$.
Secondly, when $\lambda_i$ is a simple eigenvalue, then it follows from
Proposition~\ref{prop:neumaier} that $\by_i^* \, F[\lambda_i,\lambda_i] \, \bx_i \ne 0$.

Based on these two observations, we develop a new selection criterion for NEPs
using this $F[\cdot,\cdot]$-orthogonality of right and left eigenvectors.
Suppose that we have already computed eigentriplets
$(\lambda_1, \bx_1, \by_1)$, \dots, $(\lambda_d, \bx_d, \by_d)$ for \eqref{nep}.
We assume that all computed eigenvalues $\lambda_1,\dots,\lambda_d$ are simple;
the problem is allowed to have multiple eigenvalues, as long as the
computed ones are simple.
Our selection criteria are not suitable for multiple eigenvalues.

Suppose that $(\theta, \bv)$ is a candidate approximation for the next eigenpair,
where also $\bv$ has unit norm. To steer convergence to a pair different from
the previously detected eigenpairs, and in view of \eqref{zero}, we only consider
approximate eigenpairs for which
$|\by_i^* \, F[\lambda_i, \theta] \, \bv|$ is sufficiently small for $i=1,\dots,d$.
To be precise, in the selection of the candidate approximate eigenpairs we require that
\begin{equation}
\label{critNEP}
\max_{i=1,\dots,d}
\frac{|\by_i^*\, F[\lambda_i,\theta]\, \bv|}{|\by_i^*\, F'(\lambda_i) \, \bx_i|} < \eta,
\end{equation}
where $0 < \eta < 1$ is a fixed constant, which controls the strictness of the selection.
Note that the denominator $|\by_i^*\, F'(\lambda_i) \, \bx_i|$ is a well-known
quantity that arises, e.g., in the eigenvalue condition number (cf.~\cite[Th.~2.20]{GTi17}).

The next proposition explains the behavior of the criterion close to an eigenpair.
Suppose that we have already computed the eigenpair $(\lambda_1, x_1)$,
and now approximate a pair $(\lambda_2, x_2)$.
Let us assume that $(\lambda_2+\varepsilon \phi,
x_2+\varepsilon w)$, for small $\varepsilon$ and vectors of unit norm,
is an approximation for $(\lambda_2, x_2)$.
This is a realistic assumption in this situation;
see \cite{HVo03} for options to extract an approximate eigenvalue from
an approximate eigenvector for the QEP and PEP.

\begin{proposition}\label{proplem2}
Let $y_1\ne 0$ be a left eigenvector for a simple eigenvalue $\lambda_1$
of the nonlinear eigenvalue problem $F(\lambda)x=0$.
Let $x_2\ne 0$ be a right eigenvector for a simple eigenvalue $\lambda_2\ne \lambda_1$ and let
$(\theta,v)$, where $\theta=\lambda_2+\varepsilon \phi$ and
$v=x_2+\varepsilon w$, be a candidate for the next eigenpair. Then
\begin{equation}\label{eq:acr1}
{y_1^*F[\lambda_1,\theta]v\over
y_1^*F'(\lambda_1)x_1}=C\varepsilon+{\cal O}(\varepsilon^2),
\end{equation}
where
\begin{equation}\label{eq:acr3}
C={y_1^*F(\lambda_2)w + \phi \, y_1^*F'(\lambda_2)x_2\over (\lambda_2-\lambda_1)(y_1^*F'(\lambda_1)x_1)}.
\end{equation}
\end{proposition}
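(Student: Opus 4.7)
The plan is to expand the numerator $y_1^* F[\lambda_1,\theta] v$ as a Taylor series in $\varepsilon$ around $\varepsilon = 0$ and verify that (i) the zeroth order term vanishes by the biorthogonality relation \eqref{zero}, and (ii) the first order term produces exactly the claimed constant $C$. Since $\lambda_1 \ne \lambda_2$ and $\theta = \lambda_2 + \varepsilon\phi$, for small enough $\varepsilon$ we have $\theta \ne \lambda_1$, so we may work in the ``$\lambda\ne\mu$'' branch of \eqref{divdif} throughout, i.e.\ $F[\lambda_1,\theta] = (F(\lambda_1)-F(\theta))/(\lambda_1-\theta)$.

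First I would write
\[
F[\lambda_1,\theta] = F[\lambda_1,\lambda_2] + \varepsilon\phi\cdot \partial_\mu F[\lambda_1,\mu]\big|_{\mu=\lambda_2} + \calo(\varepsilon^2),
\]
and $v = x_2 + \varepsilon w$, then expand the product $y_1^* F[\lambda_1,\theta]\,v$ into four groups by powers of $\varepsilon$. The $\varepsilon^0$ term is $y_1^* F[\lambda_1,\lambda_2]\,x_2$, which vanishes by \eqref{zero} since $y_1$ and $x_2$ are left and right eigenvectors for different eigenvalues. The two $\varepsilon^1$ contributions are $y_1^* F[\lambda_1,\lambda_2]\,w$ and $\phi\, y_1^*\,\partial_\mu F[\lambda_1,\lambda_2]\,x_2$; all remaining terms are $\calo(\varepsilon^2)$.

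For the first $\varepsilon^1$ term, use $y_1^*F(\lambda_1)=0$ to get
\[
y_1^* F[\lambda_1,\lambda_2]\,w = \frac{y_1^*F(\lambda_1)w - y_1^*F(\lambda_2)w}{\lambda_1-\lambda_2} = \frac{y_1^*F(\lambda_2)w}{\lambda_2-\lambda_1}.
\]
For the second $\varepsilon^1$ term, differentiate $(F(\lambda_1)-F(\mu))/(\lambda_1-\mu)$ in $\mu$, evaluate at $\mu=\lambda_2$, and then use both $y_1^*F(\lambda_1)x_2 = 0$ and $F(\lambda_2)x_2 = 0$ to cancel the two ``non-derivative'' contributions in the numerator; only the $-F'(\lambda_2)(\lambda_1-\lambda_2)$ piece survives, giving
\[
\phi\, y_1^*\,\partial_\mu F[\lambda_1,\lambda_2]\,x_2 = \frac{\phi\,y_1^*F'(\lambda_2)x_2}{\lambda_2-\lambda_1}.
\]
Summing these two contributions, dividing through by $y_1^*F'(\lambda_1)x_1$ (which is nonzero by Proposition~\ref{prop:neumaier} since $\lambda_1$ is simple), and collecting the $\calo(\varepsilon^2)$ remainder yields exactly \eqref{eq:acr1}--\eqref{eq:acr3}.

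No single step is genuinely hard; the main thing to be careful about is organizing the Taylor expansion so that the four obvious simplifications ($y_1^*F(\lambda_1)=0$ used twice, $F(\lambda_2)x_2=0$ used once, and \eqref{zero} used once) are applied cleanly in the right places. It is also worth noting explicitly that the analyticity of $F$ is what justifies the termwise Taylor expansion and the $\calo(\varepsilon^2)$ error bound, and that the assumption $\lambda_2\ne\lambda_1$ is essential — both for staying in the ``$\lambda\ne\mu$'' branch of the divided difference and for the denominator $\lambda_2-\lambda_1$ in $C$ to make sense.
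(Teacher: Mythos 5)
Your proof is correct and follows essentially the same route as the paper's: a first-order Taylor expansion of the divided difference in $\varepsilon$, with the zeroth-order term killed by the identities $y_1^*F(\lambda_1)=0$ and $F(\lambda_2)x_2=0$ (equivalently \eqref{zero}), and the two first-order terms combining to give $C$. The paper merely organizes the computation by expanding the numerator $y_1^*(F(\lambda_1)-F(\theta))v$ and dividing by $\lambda_1-\theta$ at the end, whereas you expand the quotient $F[\lambda_1,\mu]$ directly in $\mu$; the content is identical.
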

\begin{proof}
The proposition follows from the Taylor series expansion
\[
y_1^*(F(\lambda_1)-F(\theta))v=
- ( y_1^*F(\lambda_2)w + \phi \, y_1^* F'(\lambda_2) x_2 )\varepsilon +
{\cal O}(\varepsilon^2),
\]
where we take
into account that $y_1^*F(\lambda_1)=0$ and $F(\lambda_2)x_2=0$.
\end{proof}

Proposition \ref{proplem2} indicates that selection based on divided differences may be difficult for
eigenvalues which are very close.
This is in line with the earlier remark that the selection methods are not suited
for multiple eigenvalues.
We will again see the expression for $C$ in Section~\ref{sec:homo}.

Under the assumptions of Proposition \ref{proplem2},
$\frac{|\by_1^*F[\lambda_1,\theta]\, \bv|}{|\by_1^*\, F'(\lambda_1) \, \bx_1|}$
converges to 0
when $(\theta, \bv) \to (\lambda_2, \bx_2)$ and converges to $1$
when $(\theta, \bv) \to (\lambda_1, \bx_1)$. Although this shows
that the selection criteria \eqref{critNEP} may work for any value $\eta$ in the interval $(0,1)$,
we generally recommend to use $\eta \approx 0.1$, to avoid the already computed eigenvalues
while simultaneously avoiding the selection process to be unnecessarily strict.
We will use this value in the experiments in Section~\ref{sec:num}.

The divided difference for the PEP \eqref{pep}
has the following simple explicit expression.

\begin{proposition}
\label{prop:divdifqep}
For the polynomial eigenvalue problem \eqref{pep} we have
\[
P[\lambda,\theta]
=\sum_{i=0}^{m-1}\lambda^i\theta^{m-1-i}A_m
+\sum_{i=0}^{m-2}\lambda^i\theta^{m-2-i}A_{m-1}
+\cdots
+(\lambda+\theta)A_2+A_1.
\]
In particular, the divided difference for the QEP \eqref{qep} is
\begin{equation}
\label{divdifq}
Q[\lambda,\theta] = (\lambda+\theta)A+B.
\end{equation}
\end{proposition}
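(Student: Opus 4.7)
The proof is a routine expansion, so the plan is to simply verify the formula by direct computation. I would start from the definition \eqref{divdif} in the generic case $\lambda \ne \theta$, write
\[
P(\lambda)-P(\theta)=\sum_{k=0}^{m}(\lambda^k-\theta^k)\,A_k,
\]
and apply the classical factorisation
\[
\lambda^k-\theta^k=(\lambda-\theta)\sum_{i=0}^{k-1}\lambda^i\theta^{k-1-i},
\]
which of course handles the $k=0$ term trivially (it vanishes). Dividing by $\lambda-\theta$ then yields
\[
P[\lambda,\theta]=\sum_{k=1}^{m}\Big(\sum_{i=0}^{k-1}\lambda^i\theta^{k-1-i}\Big) A_k .
\]

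The second step is purely cosmetic: I would reorder the double sum by the coefficient matrix $A_k$, i.e.\ group together all terms with a given $A_k$, to obtain exactly the layout in the statement, with the top row corresponding to $k=m$, the next to $k=m-1$, and so on down to the contributions $(\lambda+\theta)A_2$ and $A_1$. The QEP formula \eqref{divdifq} is then just the specialisation $m=2$, $A_2=A$, $A_1=B$, $A_0=C$, giving $Q[\lambda,\theta]=(\lambda+\theta)A+B$ immediately. Finally, the case $\lambda=\theta$ requires no separate argument: the right-hand side of the formula is a polynomial in $(\lambda,\theta)$, and continuity in \eqref{divdif} (together with the fact that it agrees with $P'(\lambda)$ in the limit, as is easily checked term by term) extends the identity to the diagonal.

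There is essentially no obstacle; the only thing to be careful about is the bookkeeping of the indices when converting $\sum_{k}\sum_{i=0}^{k-1}$ into the row-by-row presentation given in the proposition, and making sure the $A_0$ term correctly drops out of the divided difference.
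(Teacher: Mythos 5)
Your computation is correct and is precisely the ``easy calculation'' the paper invokes without writing out: expand $P(\lambda)-P(\theta)$ termwise, use the geometric-sum factorisation of $\lambda^k-\theta^k$, divide by $\lambda-\theta$, and note that the diagonal case $\lambda=\theta$ agrees with $P'(\lambda)$ term by term. Nothing is missing, and the specialisation to the QEP is handled correctly.
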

\begin{proof}
This follows from an easy calculation.
\end{proof}

The selection criterion for a candidate eigenpair $(\theta, \bv)$ for the QEP
using divided differences then becomes
\begin{equation}
\label{QEPcrit}
\max_{i=1,\dots,d}
\frac{|\by_i^*((\lambda_i+\theta)A+B)\bv|}{|\by_i^*(2\lambda_iA+B) \, \bx_i|} < \eta.
\end{equation}
We note that an important key to the success and simplicity of the selection criteria
is the fact that divided differences can be elegantly generalized for many
types of eigenproblems. More specifically, we will describe a homogeneous variant of
divided differences and a selection criteria in Section~\ref{sec:homo},
and a multiparameter variant in Section~\ref{sec:mep}.

%


We now proceed to discuss some practical matters of the proposed techniques.
A main disadvantage of our selection criterion is that one needs the
left eigenvectors corresponding to converged eigenvalues during the process.
For symmetric and Hermitian eigenvalue problems and right-definite
multiparameter eigenvalue problems (see Section~\ref{sec:mep}),
these left eigenvectors come without extra computations.
In other cases, we generally need extra work to compute them.
For large-scale problems, this may comprise several additional matrix-vector
products to solve $\by$ from $F(\lambda)^*\by = 0$.
Pseudocode for this task is given in Algorithm~1.

\noindent
\vrule height 0pt depth 0.5pt width \textwidth \\
{\bf Algorithm~1:}
An iterative approach for computing a null vector of a singular matrix \\[-3mm]
\vrule height 0pt depth 0.3pt width \textwidth \\
{\bf Input: } (Almost) singular $Z$, (random) nonzero starting vector $\by_0$, tolerance $\varepsilon$. \\
{\bf Output: } An approximate null vector $\by$ of $Z$ with $\|Z\by\| \le \varepsilon$. \\
\begin{tabular}{ll}
1: & Compute $\bb = Z\by_0 \ / \ \|Z\by_0\|$ \\
2: & Solve approximately $Z\bx=\bb$ with an iterative method, \\
& \phantom{M} e.g., (preconditioned) GMRES with tolerance $\varepsilon$ \\
3: & Set $\by=(\bx-\by_0) \ / \ \|\bx-\by_0\|$
\end{tabular} \ \\
\vrule height 0pt depth 0.5pt width \textwidth

We note that in some situations we may have a reasonable approximation to
the left eigenvector $\by$ in the process. Moreover, and more importantly,
any available preconditioner will usually be of great help.
For instance, for eigencomputations, often a preconditioner $M \approx F(\tau)$,
where $\tau \in \C$ is the target of interest, is at our disposal.

For problems that are not truly large-scale, we can solve the system
in step 2 in Algorithm 1 with a direct instead of an iterative method.
In particular, this holds for applications
of the selection techniques for multiparameter eigenvalue problems, where
the problem size is often relatively small: here, $p$ vectors
of length $n$ are sought while the total problem size is of size $n^p$,
where $p$ is the number of parameters.

In either case, the extra work to compute the left eigenvectors will often be
relatively small compared with the overall effort of computing the eigenpairs.
In addition, the left eigenvectors are very useful information to determine the
condition numbers of the eigenvalues, an important quantity to assess
the reliability of the eigenvalue.
For instance, for the QEP an absolute eigenvalue condition number is given by Tisseur \cite{Tis00}:
\[
\kappa(\lambda) = \frac{|\lambda|^2 \, \|A\| + |\lambda| \, \|B\| + \|C\|}
{|\by^*Q'(\lambda) \, \bx|},
\]
where we assume that $x$ and $y$ are of unit norm.
For general PEPs there is a straightforward analogous expression \cite{Tis00}.
We will study the extra costs of the selection in more detail later in this section.



As already mentioned in the introduction, the selection criteria presented in this paper
can be combined with appropriate subspace methods, which expand given subspaces,
and extract potential eigenpairs from the subspaces.
As an example, we now give a pseudocode for a Jacobi--Davidson method to compute several
eigenpairs for the polynomial eigenvalue problem in Algorithm~2
(cf.~also \cite{HSl08}).

\noindent
\vrule height 0pt depth 0.5pt width \textwidth \\
{\bf Algorithm~2:}
Jacobi--Davidson type method for computing several eigenpairs for the polynomial eigenvalue
problem \eqref{pep} using selection. \\[-3mm]
\vrule height 0pt depth 0.3pt width \textwidth \\
{\bf Input: } Matrix polynomial $P(\lambda) = \sum_{i=0}^m \lambda^i A_i$,
desired number of eigenpairs $d$, starting vector $\bv$, tolerance $\varepsilon$,
minimum and maximum dimensions of search space {\sf mindim}, {\sf maxdim},
{\bf threshold $\eta$ for \eqref{critNEP}}. \\
{\bf Output: } $d$ approximate eigenpairs. \\[0.5mm] 
\begin{tabular}{ll}
1: & $\bt=\bv, \ V_0 = [\,], \ k=0$ \\
   & {\bf while} less than $d$ eigenpairs detected:\\ 
2: & \phantom{M} $k=k+1$,\quad {\sf rgs}($V_{k-1}, \bt$) $\to V_k$ \\
3: & \phantom{M} Compute $k$th columns of $W_i^{(k)}=A_iV_k$, \ $i=1,\dots,m$ \\
4: & \phantom{M} Compute $k$th rows and columns of $H_i^{(k)}=V_k^*A_iV_k=V_k^*W_i^{(k)}$ \\
5: & \phantom{M} {\bf Select} the best pair $(\theta, \bc)$ using standard, harmonic, or refined \\
& \phantom{MM} Rayleigh--Ritz {\bf among pairs satisfying criterion \eqref{critNEP}} \\
6: & \phantom{M} $\bv=V_k \bc$ \\
7: & \phantom{M} $\br= P(\theta)\bv = \sum_{i=0}^m \theta^i \, W_i\bc$ \\
8: & \phantom{M} {\bf if} $\|\br\| \le \varepsilon$ and \eqref{critNEP} satisfied:
eigenpair found! \\
& \phantom{MM} {\bf Select} next-best candidate pair $(\theta, \bv)$ {\bf satisfying criterion \eqref{critNEP}} \\
& \phantom{MM} {\bf if} $d$ eigenpairs detected, {\bf stop} \\
9: & \phantom{M} {\bf if} $k = {\sf maxdim}$:\\ 
& \phantom{MM} $k = {\sf mindim}$ \\
& \phantom{MM} {\bf Restart} with ${\sf mindim}$ pairs $(\theta,\bv)$, $\bv=V\bc$
{\bf that satisfy criterion \eqref{critNEP}} \\
& \phantom{MMM} (if too few, supplement by pairs that do not satisfy \eqref{critNEP}) \\
10: & \phantom{M} Solve (approximately) $\bt \perp \bv$ from \
$\displaystyle \Big( I-\frac{P'(\theta)\bv\bv^*}{\bv^*P'(\theta)\bv} \Big) \, P(\theta) \, \bt = -\br$ \\
& \phantom{MM} (possibly with preconditioner) \\
\end{tabular} \ \\
\vrule height 0pt depth 0.5pt width \textwidth

\bigskip\noindent
Let us comment on Algorithm~2.
The key steps for the selection process are indicated in boldface.
In step~3, {\sf rgs} denotes repeated Gram--Schmidt orthogonalization
or any other method to compute an orthonormal basis in a stable way.
There are several strategies available for the extraction of an approximate
eigenpair from the search space ${\cal V}_k$ in step~5.
For the extraction of approximate eigenvectors, standard and
harmonic Rayleigh--Ritz are default options \cite{HSl08}, while \cite{HVo03}
discusses choices for an approximate eigenvalue.
We can use the selection criterion independently of these extraction choices.
The key selection approach described in this paper is used in steps~5 and 9.

In step 5, it might happen that none of the Ritz vectors
satisfies the selection criterion. In such a case we pick the best Ritz pair
given the target regardless of the selection criteria and skip step~8 so
that the method does not find the same eigenpair twice. The idea is that as
the subspace expands, Ritz approximations for
other eigenvalues should appear in the extraction.

We now give some more details about the costs of Algorithm~2 with selection,
compared to the original version without it.
Consider the criterion for the polynomial eigenvalue problem of degree $m$.
We look at \eqref{pep} and \eqref{QEPcrit} to understand the costs.
The selection criterion does not require any extra matrix-vector products (MVs),
except for the $m+1$ MVs $\by^*\!A_j$
per detected left eigenvector $\by$, and any MVs necessary to compute this left eigenvector.
The costs of computing $\by$ depends on the problem.
For problems with certain structure, for instance symmetry, the left vector
may come for free.
For nearly symmetric problems, $\bx$ may be a good approximation to $\by$.
For general problems, several steps with an iterative solver may be necessary to
solve for $\by$. A preconditioner that we already have for the eigenvalue problem
will generally be of great help.
As an alternative for not-too-large problems, an exact solve with $P(\theta)$ is often an efficient
method to find a very good approximation to $\by$.

Next, we study the costs of the divided differences.
Per Ritz vector that we choose to test, we need a product of a ``primitive Ritz vector'' $c$
with the $n \times k$ matrix $V_k$ (cost $2nk$), some vector additions (cost $2nm$)
and one inner product for each of the $d$ already detected eigenpairs (cost $2nd$).
This is summarized in the following table.

\begin{tabular}{lll} \\ \hline
Computation & Approx.~costs & Note \\ \hline \rule{0pt}{2.3ex}%
$\by$ & (Variable) & Only when eigentriplet found \\
$\by^*A_j$ & $\calo(nm)$ & Only when eigentriplet found \\
$\bv_j = V_k\bc_j$, $\by^*P[\lambda_i,\theta_j]\bv_j$ & $\calo(n(k+d+m))$ &
Every step, per pair to test \eqref{critNEP} \\
\hline
\end{tabular}

\medskip\noindent
The dominant costs of lines 2 and 3 in this table will usually be $\approx 2nk$ per Ritz pair
that does {\em not} satisfy the criterion \eqref{critNEP}, so that we have to take the next candidate pair.
We have to compute and store the detected left eigenvectors $\by_i$, but in contrast to
locking, no orthogonalization costs with respect to converged vectors is required
as they do not form part of the search space.
Note that the extra costs of the selection procedure may be considered low compared to the
complexity $\calo(nk^2)$ necessary for orthonormalizing the basis.
Also, note that we get valuable extra information about the left eigenvalues and the
condition numbers of the eigenvalues.

\begin{example}
{\rm
To avoid convergence to the same eigenpairs, one may wonder if the
proposed selection criterion based on divided differences \eqref{divdif} is really needed:
could it be an alternative to simply compare the angles of a current approximate
eigenvector $\bv$ with the already detected eigenvectors $\bx_1, \dots, \bx_d$ instead,
and make sure that $\bv$ is sufficiently different?
This simple example for the standard eigenvalue problem shows that this is generally not a
stable approach; the one based on the divided differences is preferable.

Consider the $2 \times 2$ matrix
\[
A = \left[
\begin{array}{cc}
0 & \ \varepsilon \\
0 & \ \delta
\end{array}
\right],
\]
for small $\delta$ and $\varepsilon$, and suppose that the eigenpair
$(0, \be_1)$ has already been found, where $\be_1$ is the first standard basis vector.
The (nonnnormalized) corresponding left eigenvector is $\by = [\delta, \ \, -\varepsilon]^T$.
Since $\delta \approx 0$, $\lambda = 0$ is close to being a multiple eigenvalue,
and therefore the second eigenvector is numerically ill defined:
all vectors $\bv$ with unit norm and associated Rayleigh quotient $\theta = \bv^*\!A\bv$
have a small residual $\br = A\bv - \theta \bv$.
Comparing the angle of $\bv$ with $\be_1$ only rules out $\bv$ that are close to $\be_1$.
With the divided difference requirement $\by^*\bv \approx 0$ the vector $\bv$
is forced to be close to the true (non-normalized) second eigenvector $[\varepsilon, \ \, \delta]^T$.
}
\end{example}

\section{Selection for polynomial eigenproblems in homogeneous coordinates}
\label{sec:homo}
The context of this section is restricted to polynomial eigenvalue problems
\eqref{pep} rather than the general nonlinear eigenvalue problem.
The PEP \eqref{pep} may have infinite eigenvalues when
the leading matrix $A_m$ is singular. We therefore study these problems in homogeneous
coordinates, and propose a new notion for divided differences in this setting.
We consider the QEP \eqref{qep} for the ease of the presentation,
but the techniques carry over to the general polynomial eigenvalue problem \eqref{pep}.

Already at this place, we would like to stress the fact that a homogeneous
approach may be very valuable in itself, apart from the use for infinite eigenvalues
that may be present.
As we will see in this section, homogeneous coordinates lead to a different
selection criterion (see Experiment~6.1 for a numerical example),
that may be seen as a mediator between the selection
criterion for the standard QEP and the reverse QEP.
Homogeneous techniques are mathematically elegant and pleasing,
and account for the fact that eigenvalue problems may be scaled and transformed
in various ways.

Finite and infinite eigenvalues can be elegantly treated together
in one consistent framework by the use of homogeneous coordinates
\begin{equation}
\label{homo}
Q(\alpha, \beta) = \alpha^2 A + \alpha \beta B + \beta^2 C.
\end{equation}
Here, we have the usual conventions: $\lambda = \alpha / \beta$;
$\lambda = \infty$ corresponds to $(\alpha, \beta) = (1,0)$; and
$(\alpha, \beta)$ and $(\gamma \alpha, \gamma \beta)$
represent the same projective number when $\gamma \in \C$ is nonzero.

In the context of divided differences, we have two pairs
$(\alpha_1, \beta_1)$ and $(\alpha_2, \beta_2)$:
a detected eigenvalue $\lambda = \alpha_1/\beta_1$,
and a new approximation to an eigenvalue $\theta = \alpha_2 / \beta_2$.
It is common to normalize (or scale) $|\alpha_1|^2 + |\beta_1|^2 =
|\alpha_2|^2 + |\beta_2|^2 = 1$, which can be done without loss of generality.
Moreover, we might also assume that the $\beta$'s are real, nonnegative,
and in the interval $[0,1]$; however, this turns out to be sometimes undesirable
in our context of divided differences, for the following reason.
When $(\alpha_2, \beta_2) \to (\alpha_1, \beta_1)$
as projective coordinates, we want the componentwise convergence
$\alpha_1 \to \alpha_2$ and $\beta_1 \to \beta_2$ in what is to follow.
This is not satisfied for, e.g., the pairs $(i,\varepsilon)$ and $(1,0)$,
for $\varepsilon \to 0$. The first pair converges to the second pair,
the infinite eigenvalue $1/0$, but there is no component-wise convergence.

Therefore, instead, we scale the pair $(\alpha_1, \beta_1)$ such that
the coordinate with the maximal absolute value is in $[0,1]$.
For convenience of notation, we assume that this maximum coordinate is
$\beta_1$, but this is not a restriction.
As a result, we know that $\beta_1 \ge \frac{1}{2} \sqrt{2}$.
The other pair is then scaled accordingly to the first pair: $\beta_2 \in [0,1]$,
so that (say) $\beta_2 > 0.7$ when $(\alpha_2, \beta_2)$ is close
enough to $(\alpha_1, \beta_1)$.
This scaling ensures that convergence also implies componentwise convergence.

Let $D_{\alpha}$ denote the derivative operator with respect to $\alpha$.
A homogeneous quantity $DQ(\alpha, \beta)$
for the homogeneous problem \eqref{homo}, that has a similar role as
$Q'(\lambda)$ for problem \eqref{qep}, is (see, e.g., \cite[Thm.~3.3]{DTi03})
\begin{equation}
\label{DQ}
DQ(\alpha, \beta) := \conj \beta \, D_{\alpha} Q(\alpha, \beta) - \conj \alpha \, D_{\beta} Q(\alpha, \beta).
\end{equation}
The following key property of \eqref{DQ} will be exploited in the present section.
It is part of \cite[Thm.~3.3]{DTi03}; cf.~Proposition~\ref{prop:Fprime}.

\begin{proposition}
If eigenvalue $(\alpha, \beta)$ is simple with associated right and left eigenvectors
$\bx$ and $\by$ then $\by^* DQ(\alpha, \beta) \, \bx \ne 0$.
\end{proposition}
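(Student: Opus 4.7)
My plan is to reduce the homogeneous statement to the already proven affine version in Proposition~\ref{prop:Fprime} (Neumaier). Since $(\alpha,\beta)$ represents a projective point, at least one coordinate is nonzero, and by projective symmetry it suffices to treat the case $\beta\ne 0$; the case $\alpha\ne 0$, $\beta=0$ is handled by the entirely analogous argument applied to the reverse polynomial $Q^{\text{rev}}(\mu)=A+\mu B+\mu^2 C$ on the chart $\mu=\beta/\alpha$. With $\lambda=\alpha/\beta$, the essential algebraic identity is the homogenization relation $Q(\alpha,\beta)=\beta^2\,Q(\lambda)$. Simplicity of $(\alpha,\beta)$ as a projective eigenvalue translates on the affine chart $\beta=1$ into simplicity of $\lambda$ as a root of $\det Q(\lambda)$, so Proposition~\ref{prop:Fprime} applies and yields $\by^*Q'(\lambda)\,\bx\ne 0$.

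The next step is to differentiate $Q(\alpha,\beta)=\beta^2 Q(\alpha/\beta)$ with respect to $\alpha$ and $\beta$ using the chain and product rules. A short calculation gives
\begin{align*}
D_\alpha Q(\alpha,\beta) &= \beta\,Q'(\lambda), \\
D_\beta Q(\alpha,\beta) &= 2\beta\,Q(\lambda)-\alpha\,Q'(\lambda).
\end{align*}
Substituting these into the definition \eqref{DQ} collapses the expression to
\[
DQ(\alpha,\beta)\;=\;\conj\beta\cdot\beta\,Q'(\lambda)-\conj\alpha\bigl(2\beta\,Q(\lambda)-\alpha\,Q'(\lambda)\bigr)
\;=\;(|\alpha|^2+|\beta|^2)\,Q'(\lambda)\;-\;2\conj\alpha\,\beta\,Q(\lambda).
\]

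Applying $\by^*$ on the left and $\bx$ on the right, the second term is annihilated because $Q(\lambda)\,\bx=0$, and we are left with
\[
\by^*DQ(\alpha,\beta)\,\bx \;=\; (|\alpha|^2+|\beta|^2)\,\by^*Q'(\lambda)\,\bx.
\]
Both factors on the right are nonzero: $|\alpha|^2+|\beta|^2>0$ since $(\alpha,\beta)\ne(0,0)$, and $\by^*Q'(\lambda)\,\bx\ne 0$ by Proposition~\ref{prop:Fprime}. The conclusion follows.

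I do not expect any deep obstacle; the only point requiring some care is the justification that ``simple'' is a projectively invariant notion, i.e.~that multiplying the determinant by the nonvanishing factor $\beta^{2n}$ preserves the order of vanishing at the affine point $\lambda=\alpha/\beta$. This is routine but should be stated explicitly to make the case reduction between the two affine charts airtight, and to confirm that the left/right eigenvectors of $Q(\alpha,\beta)$ indeed coincide with those of the dehomogenized problem $Q(\lambda)$.
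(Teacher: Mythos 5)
Your proof is correct, but it is worth noting that the paper does not actually prove this proposition: it is stated as a quoted fact, ``part of [DTi03, Thm.~3.3]'', so there is no in-paper argument to compare against. What you supply is a self-contained derivation, and its key identity
\[
DQ(\alpha,\beta)=(|\alpha|^2+|\beta|^2)\,Q'(\lambda)-2\conj\alpha\,\beta\,Q(\lambda),\qquad \lambda=\alpha/\beta,
\]
is precisely the content of the paper's later Lemma~\ref{lem1} specialized to $m=2$ (the paper computes $D_\alpha P=\beta^{m-1}P'(\alpha/\beta)$ and $D_\beta P=m\beta^{m-1}P(\alpha/\beta)-\beta^{m-2}\alpha P'(\alpha/\beta)$ and applies the result to the eigenvector so that the $P(\alpha/\beta)x$ term drops, exactly as you do). So your route is consistent with the paper's own machinery, and it buys a proof where the paper only offers a citation; combined with Proposition~\ref{prop:Fprime} the conclusion $\by^*DQ(\alpha,\beta)\bx=(|\alpha|^2+|\beta|^2)\,\by^*Q'(\lambda)\bx\ne 0$ follows. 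The two points you flag as needing care are indeed the only ones: the projective invariance of simplicity (via $\det Q(\alpha,\beta)=\beta^{2n}\det Q(\alpha/\beta)$, so the order of vanishing is preserved on the chart $\beta\ne 0$), and the infinite-eigenvalue chart, where the reversal $\mu=\beta/\alpha$ only changes $DQ$ by a sign (since swapping the roles of $\alpha$ and $\beta$ in \eqref{DQ} negates it), which does not affect nonvanishing. Both are routine and your argument is complete once they are written out.
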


We note that a vector of the form $DQ(\alpha, \beta) \, \bx$ has also been
exploited in the context of homogeneous Jacobi--Davidson \cite{HNo07}.

Divided differences make a distinction between eigenvectors belonging to the same,
and belonging to different eigenvalues.
We will use these differences to find out whether approximate eigenpairs
are likely to converge to already detected eigenpairs which need to be avoided,
or to new pairs.
Similar to the nonhomogeneous divided difference $Q[\lambda, \mu]$,
we now would like to derive an expression for a divided difference
$Q[(\alpha_1,\beta_1), (\alpha_2,\beta_2)]$ for the homogeneous problem \eqref{homo},
which may be used in the selection process.
This is done in the following definition, which is new to the best of our knowledge.

\begin{definition}
\label{def:homdd}
Let $(\alpha_1,\beta_1)$ and $(\alpha_2,\beta_2)$ be normalized homogeneous coordinates.
We define divided differences in homogeneous coordinates as
\[
Q[(\alpha_1,\beta_1), (\alpha_2,\beta_2)] :=
\left\{\begin{matrix}
\ds{Q(\alpha_1, \beta_1)-Q(\alpha_2, \beta_2)\over \alpha_1 \beta_2-\alpha_2 \beta_1}
& {\rm if}\ (\alpha_1,\beta_1) \ne (\alpha_2,\beta_2), \\
& \\[-2mm]
DQ(\alpha_1, \beta_1) & {\rm otherwise}.
\end{matrix}\right.
\]
\end{definition}

Note that this expression is both elegant and related to the chordal distance.
We recall that the chordal distance of two homogeneous numbers
$(\alpha_1, \beta_1)$ and $(\alpha_2, \beta_2)$ is (see, e.g., \cite[p.~139]{Ste01})
\begin{equation}
\label{chord}
\chi((\alpha_1, \beta_1), (\alpha_2, \beta_2))
= \frac{|\alpha_1 \, \beta_2 - \alpha_2 \, \beta_1|}
{\sqrt{|\alpha_1|^2 + |\beta_1|^2} \, \sqrt{|\alpha_2|^2 + |\beta_2|^2}}.
\end{equation}
Note that this is the sine of the angle between the two projective
numbers interpreted as vectors; cf.~\cite[p.~75]{DTi03}.
With the standard scaling $|\alpha|^2+|\beta|^2=1$ of projective numbers,
this implies that the absolute value of the denominator
in the first case of Definition~\ref{def:homdd} is equal to the chordal distance
\eqref{chord}.

The next result is a justification of Definition~\ref{def:homdd}:
$Q[(\alpha_1,\beta_1), (\alpha_2,\beta_2)]$ is a continuous function of its two
variables.

\begin{proposition}
\label{prop:homdd}
If $(\alpha_2, \beta_2) \to (\alpha_1, \beta_1)$ then
$Q[(\alpha_1,\beta_1), (\alpha_2,\beta_2)] \to DQ(\alpha_1, \beta_1)$.
\end{proposition}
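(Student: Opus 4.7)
The plan is a direct polynomial expansion along the distinguished tangent direction at $(\alpha_1,\beta_1)$. Since $Q(\alpha,\beta)$ is homogeneous of total degree~$2$, the numerator $Q(\alpha_1,\beta_1)-Q(\alpha_2,\beta_2)$ is an exact quadratic polynomial in the increments $\delta_\alpha := \alpha_2-\alpha_1$ and $\delta_\beta := \beta_2-\beta_1$ without constant term, whereas the denominator $\alpha_1\beta_2-\alpha_2\beta_1$ is bilinear, hence linear in the same increments. The strategy is therefore to write both as series in a single infinitesimal parameter $t$ and compare leading orders.

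I would parameterize the approach by $(\delta_\alpha,\delta_\beta) = t(-\conj{\beta_1},\conj{\alpha_1})$ with $t\to 0$ complex. This Hermitian-orthogonal direction to $(\alpha_1,\beta_1)$ is the one singled out by the scaling discussed before Definition~\ref{def:homdd} (unit norm, $\beta_1$ real in $[0,1]$), and it preserves the sphere normalization to first order. A one-term expansion of the numerator gives
\[
Q(\alpha_1,\beta_1)-Q(\alpha_2,\beta_2) = -\bigl(\delta_\alpha\,D_\alpha Q + \delta_\beta\,D_\beta Q\bigr)(\alpha_1,\beta_1) + O(|t|^2),
\]
and the substitution turns this into $t\bigl(\conj{\beta_1}\,D_\alpha Q - \conj{\alpha_1}\,D_\beta Q\bigr)(\alpha_1,\beta_1) + O(|t|^2) = t\,DQ(\alpha_1,\beta_1) + O(|t|^2)$ by \eqref{DQ}. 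The denominator is exact by bilinearity: $\alpha_1\delta_\beta - \delta_\alpha\beta_1 = t(|\alpha_1|^2+|\beta_1|^2) = t$. The quotient is therefore $DQ(\alpha_1,\beta_1) + O(|t|)$ and tends to $DQ(\alpha_1,\beta_1)$ as $t\to 0$.

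The main obstacle I expect is confirming that the same limit arises for every admissible sequence $(\alpha_2,\beta_2)\to(\alpha_1,\beta_1)$, not merely along this particular tangent line. Under the normalization prescribed before Definition~\ref{def:homdd}, the projective tangent space at $(\alpha_1,\beta_1)$ is the full complex span of $(-\conj{\beta_1},\conj{\alpha_1})$, and the argument above is linear in the complex parameter $t$ and so covers every direction within that tangent. The remaining (radial) direction $(\alpha_1,\beta_1)$ merely rescales the projective point and contributes nothing to the chordal distance, so it cannot appear along a sequence that actually converges projectively; hence the tangent-line computation above suffices.
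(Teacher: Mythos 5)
Your proof is correct and follows essentially the same route as the paper: both arguments rest on the first-order Hermitian orthogonality of the increment to $(\alpha_1,\beta_1)$ (the paper's condition \eqref{orthab}), which makes the denominator equal to $t(|\alpha_1|^2+|\beta_1|^2)=t$ and the first-order term of the numerator equal to $t\,DQ(\alpha_1,\beta_1)$. A minor bonus of your explicit parameterization $(\delta_\alpha,\delta_\beta)=t(-\conj{\beta_1},\conj{\alpha_1})$ is that it avoids the separate $\alpha_1=0$ case the paper must treat, at the cost of Taylor-remainder bookkeeping that the paper sidesteps by factoring the numerator exactly as $(\alpha_1-\alpha_2)R_1+(\beta_1-\beta_2)R_2$.
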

\begin{proof}
Denote $d\alpha = \alpha_2 - \alpha_1$ and $d\beta = \beta_2 - \beta_1$.
In view of the restriction that the numbers are on the complex
unit circle, the following orthogonality condition holds
(cf., e.g., \cite[Eq.~(3)]{DTi03})
\begin{equation}
\label{orthab}
\conj \alpha_1 \, d\alpha + \conj \beta_1 \, d\beta = 0.
\end{equation}
For the numerator of the divided differences we have
\begin{align*}
Q(\alpha_1, \beta_1) - Q(\alpha_2, \beta_2)
& = Q(\alpha_1, \beta_1) - Q(\alpha_2, \beta_1) + Q(\alpha_2, \beta_1) - Q(\alpha_2, \beta_2) \\
& = (\alpha_1-\alpha_2) \, R_1(\alpha_1, \alpha_2, \beta_1)
+ (\beta_1-\beta_2) \, R_2(\alpha_2, \beta_1, \beta_2),
\end{align*}
where $R_1(\alpha_1, \alpha_2, \beta) = (\alpha_1+\alpha_2)A + \beta B$ and
$R_2(\alpha, \beta_1, \beta_2) = \alpha B + (\beta_1+\beta_2)C$.
Note that
$\ds \lim_{\alpha_2 \to \alpha_1} R_1(\alpha_1, \alpha_2, \beta)
= D_{\alpha}Q(\alpha_1, \beta)$,
$\ds \lim_{\beta_2 \to \beta_1} R_2(\alpha, \beta_1, \beta_2)
= D_{\beta}Q(\alpha, \beta_1)$,
and that such a procedure also extends to general polynomial eigenvalue problems \eqref{pep}.


Assuming $\alpha_1 \ne 0$, we now have, using \eqref{orthab} and the definitions
of $d\alpha$ and $d\beta$,
\begin{align*}
\frac{\alpha_1-\alpha_2}{\alpha_1 \beta_2 - \alpha_2 \beta_1}
& = \frac{-d\alpha}{\alpha_1 \, d\beta-d\alpha \, \beta_1}
= \frac{d\beta \, \conj \beta_1}
{\conj \alpha_1 \, (\alpha_1 \, d\beta+d\beta \, \beta_1 \, \conj \beta_1 \, \conj \alpha_1^{-1})}
= \frac{d\beta \, \conj \beta_1}{d\beta} = \conj \beta_1
\end{align*}
and, similarly,
\begin{align*}
\frac{\beta_1-\beta_2}{\alpha_1 \beta_2 - \alpha_2 \beta_1}
& = \frac{-d\beta}{\alpha_1 \, d\beta-d\alpha \, \beta_1}
= \frac{-d\beta}
{\alpha_1 \, d\beta+d\beta \, \beta_1 \, \conj \beta_1 \, \conj \alpha_1^{-1}}
= \frac{-d\beta \, \conj \alpha_1}{d\beta} = -\conj \alpha_1.
\end{align*}
In the case that $\alpha_1 = 0$, it is easy to check that the first
expression equals $\beta_1^{-1}$ which is equal to $\conj \beta_1$ in this case,
and the second expression equals $d\beta \, d\alpha^{-1} \, \beta_1^{-1} = 0$
and therefore is equal to $-\conj \alpha_1$, since $d\beta$ should vanish
in view of \eqref{orthab}.
\end{proof}


Suppose $(\alpha_1, \beta_1)$ is an eigenvalue with right eigenvector $\bx$
and $(\alpha_2, \beta_2)$ is an eigenvalue with left eigenvector $\by$.
Proposition~\ref{prop:homdd} shows that the following two desirable
properties are satisfied:
\begin{itemize}
\item
$\by^* Q[(\alpha_1,\beta_1), (\alpha_2,\beta_2)] \, \bx \ne 0$
when $\bx$ and $\by$ belong to the same simple eigenvalue
$(\alpha_1,\beta_1) = (\alpha_2,\beta_2)$;
\item $\by^* Q[(\alpha_1,\beta_1), (\alpha_2,\beta_2)] \, \bx = 0$
when $\bx$ and $\by$ correspond to different eigenvalues
$(\alpha_1,\beta_1) \ne (\alpha_2,\beta_2)$.
\end{itemize}
These two properties will help us in the selection process to avoid
convergence towards an already detected eigenvalue:
we would like to only select candidate approximate eigenpairs (in homogeneous form)
$((\theta, \eta), \bv)$ for which the divided differences
$\by_i^* Q[(\alpha_i,\beta_i), (\theta,\eta)] \, \bv$
are small enough for all previously detected eigenvalues $(\alpha_i,\beta_i)$
with corresponding left eigenvectors $\by_i$. For this reason,
and in line with \eqref{critNEP}, during the selection process we require that
\begin{equation}
\label{homselcrit}
\max_{i=1,\dots,d}
\frac{|\by_i^*Q[(\alpha_i, \beta_i), (\theta, \eta)] \, \bv|}
{|\by_i^*DQ(\alpha_i, \beta_i) \, \bx_i|} < \eta,
\end{equation}
where, for instance, $\eta = 0.1$.
Elegantly, the denominator in this criterion also appears in the
denominator of the condition number \cite[Thm.~4.2]{DTi03}.

The above selection criteria may be exploited for polynomial eigenvalue problems
\eqref{pep}, especially if they are expected to have infinite eigenvalues.

We can show that for a PEP the
relation between the standard selection criteria and its
homogeneous counterpart depends only on the magnitude of the eigenvalues. As a result we see that,
if the magnitudes of the new candidate for the eigenvalue and the
computed eigenvalue do not differ much, then \eqref{critNEP} and
\eqref{homselcrit} return very close values and both criteria should give the same decision.

\begin{lemma}\label{lem1} Let $x$ be the eigenvector for a simple eigenvalue
$\lambda$ of the polynomial eigenvalue problem \eqref{pep} of degree $m$.
If $P(\alpha,\beta)$ is the homogeneous variant of \eqref{pep}, i.e.,
$P(\alpha,\beta)=\beta^m P(\alpha/\beta)$ for $\beta\ne 0$, then
\[
P'(\lambda)x=(1+|\lambda|^2)^{(m-2)/2} DP(\alpha,\beta)x,
\]
where
$\alpha= \lambda \, (1+|\lambda|^2)^{-1/2}$ and
$\beta= (1+|\lambda|^2)^{-1/2}$.
\end{lemma}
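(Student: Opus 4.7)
The plan is to reduce $DP(\alpha,\beta)$ to an expression involving only $P(\lambda)$ and $P'(\lambda)$ by using the chain rule on the factorization $P(\alpha,\beta)=\beta^m P(\alpha/\beta)$, then kill the $P(\lambda)$-term by applying the result to the right eigenvector $x$ and invoking the normalization $|\alpha|^2+|\beta|^2=1$.

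First I would compute the two partial derivatives via the chain rule:
\[
D_\alpha P(\alpha,\beta)=\beta^{m-1}\,P'(\alpha/\beta),\qquad
D_\beta P(\alpha,\beta)=m\beta^{m-1}\,P(\alpha/\beta)-\alpha\beta^{m-2}\,P'(\alpha/\beta).
\]
Substituting into the definition \eqref{DQ} of $DP(\alpha,\beta)=\conj\beta\,D_\alpha P-\conj\alpha\,D_\beta P$ and grouping the $P'(\alpha/\beta)$ terms gives
\[
DP(\alpha,\beta)=\bigl(\conj\beta\,\beta^{m-1}+|\alpha|^2\beta^{m-2}\bigr)P'(\lambda)-m\conj\alpha\,\beta^{m-1}\,P(\lambda),
\]
where I have written $\lambda=\alpha/\beta$.

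Next I would use the chosen scaling: with $\alpha=\lambda(1+|\lambda|^2)^{-1/2}$ and $\beta=(1+|\lambda|^2)^{-1/2}$ the denominator $\beta$ is real positive, so $\conj\beta\,\beta^{m-1}=|\beta|^2\,\beta^{m-2}$, and the coefficient of $P'(\lambda)$ simplifies via $|\alpha|^2+|\beta|^2=1$ to exactly $\beta^{m-2}$. Thus
\[
DP(\alpha,\beta)=\beta^{m-2}\,P'(\lambda)-m\conj\alpha\,\beta^{m-1}\,P(\lambda).
\]
Finally, applying this identity to the eigenvector $x$ annihilates the second term because $P(\lambda)x=0$, leaving $DP(\alpha,\beta)\,x=\beta^{m-2}\,P'(\lambda)\,x$. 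Since $\beta^{m-2}=(1+|\lambda|^2)^{-(m-2)/2}$, rearranging yields the desired identity.

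No step looks genuinely obstructive; the only place to be careful is the algebraic bookkeeping of the two derivative terms and the observation that $|\alpha|^2+|\beta|^2=1$ is precisely what collapses the coefficient of $P'(\lambda)$ to a clean power of $\beta$. The real content is that the homogeneous derivative $DP$, which a priori mixes $P$ and $P'$, automatically sheds its $P$-part on eigenvectors, leaving a scalar multiple of $P'(\lambda)x$ whose scaling factor is dictated entirely by $|\lambda|$.
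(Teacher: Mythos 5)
Your proof is correct and follows essentially the same route as the paper's: compute $D_\alpha P$ and $D_\beta P$ by the chain rule, substitute into the definition of $DP$, use $P(\lambda)x=0$ to drop the $P$-term, and collapse the $P'$-coefficient to $\beta^{m-2}$ via $|\alpha|^2+|\beta|^2=1$ with $\beta$ real positive. The only cosmetic difference is that you simplify the coefficient before applying the identity to $x$, whereas the paper applies it to $x$ first.
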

\begin{proof}From the partial derivatives
\begin{align*}
D_{\alpha}P(\alpha,\beta)&=\beta^{m-1}P'(\alpha/\beta),\\
D_{\beta}P(\alpha,\beta)&=m\,\beta^{m-1}P(\alpha/\beta)-\beta^{m-2}\alpha \, P'(\alpha/\beta),
\end{align*}
it follows from \eqref{DQ} that
\[
DP(\alpha,\beta)x=\beta^{m-2}\,(|\beta|^2+|\alpha|^2)\, P'(\alpha/\beta)\,x
=(1+|\lambda|^2)^{-{(m-2)/2}}\,P'(\lambda)\,x.
\]
\end{proof}

In the next result, we assume for convenience of presentation that
the eigenvalues and approximation are real, as this greatly simplifies
the expressions.

\begin{proposition}\label{lem2}
Let $y_1$ be the left eigenvector of a simple real eigenvalue $\lambda_1$
of the polynomial eigenvalue problem \eqref{pep} of degree $m$ and
let $P(\alpha,\beta)$ be the homogeneous variant of \eqref{pep}.
Let $x_2$ be the right eigenvector for a simple real eigenvalue $\lambda_2\ne \lambda_1$ and let
$(\theta,v) = (\lambda_2+\varepsilon\phi, \, x_2+\varepsilon w)$
be a candidate for the next eigenpair, where $\theta \in \mathbb R$. Then
\begin{equation}\label{eq:cr2}
{y_1^*\,P[(\alpha_1,\beta_1),(\widetilde \alpha_2,\widetilde \beta_2)]\,v\over
y_1^*DP(\alpha_1,\beta_1)x_1}=\left({1+\lambda_1^2\over 1+\lambda_2^2}\right)^{(m-1)/2}C\varepsilon
+ {\cal O}(\varepsilon^2),
\end{equation}
where $C$ is given by \eqref{eq:acr3},
$\alpha_i=\lambda_i\,(1+\lambda_i^2)^{-1/2}$, $\beta_i= (1+\lambda_i^2)^{-1/2}$ for $i=1,2$,
$\widetilde\alpha_2=\theta\,(1+\theta^2)^{-1/2}$, and $\widetilde\beta_2= (1+\theta^2)^{-1/2}$.
\end{proposition}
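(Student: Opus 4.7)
The plan is to reduce \eqref{eq:cr2} to the nonhomogeneous expression \eqref{eq:acr1} of Proposition~\ref{proplem2} by expressing both the numerator and the denominator of the left-hand side of \eqref{eq:cr2} in terms of their nonhomogeneous counterparts $P[\lambda_1,\theta]$ and $P'(\lambda_1)$. The bookkeeping of scaling factors is the only real work; the asymptotics then follow directly from \eqref{eq:acr1}.

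First I would handle the denominator: Lemma~\ref{lem1} gives immediately
\[
y_1^* DP(\alpha_1,\beta_1)\,x_1 = (1+\lambda_1^2)^{-(m-2)/2}\,y_1^*P'(\lambda_1)\,x_1,
\]
with no $\varepsilon$ dependence, since $(\alpha_1,\beta_1)$ is fixed.

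Next I would compute the numerator. Using $P(\alpha,\beta)=\beta^m P(\alpha/\beta)$ gives $P(\alpha_1,\beta_1)=(1+\lambda_1^2)^{-m/2}P(\lambda_1)$ and $P(\widetilde\alpha_2,\widetilde\beta_2)=(1+\theta^2)^{-m/2}P(\theta)$, while a direct calculation shows
\[
\alpha_1\widetilde\beta_2-\widetilde\alpha_2\beta_1 = \frac{\lambda_1-\theta}{\sqrt{(1+\lambda_1^2)(1+\theta^2)}}.
\]
The key simplification is that $y_1^*P(\lambda_1)=0$, so after multiplying by $y_1^*$ on the left and $v$ on the right only the $P(\theta)$ term survives, yielding
\[
y_1^*P[(\alpha_1,\beta_1),(\widetilde\alpha_2,\widetilde\beta_2)]\,v
=\frac{(1+\lambda_1^2)^{1/2}}{(1+\theta^2)^{(m-1)/2}}\,\cdot\,\frac{y_1^*P(\theta)\,v}{\theta-\lambda_1}
=\frac{(1+\lambda_1^2)^{1/2}}{(1+\theta^2)^{(m-1)/2}}\,\,y_1^*P[\lambda_1,\theta]\,v,
\]
where the last equality again uses $y_1^*P(\lambda_1)=0$ in the numerator of the standard divided difference.

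Combining the two formulas yields
\[
\frac{y_1^*P[(\alpha_1,\beta_1),(\widetilde\alpha_2,\widetilde\beta_2)]\,v}{y_1^*DP(\alpha_1,\beta_1)\,x_1}
=\left(\frac{1+\lambda_1^2}{1+\theta^2}\right)^{(m-1)/2}\,\frac{y_1^*P[\lambda_1,\theta]\,v}{y_1^*P'(\lambda_1)\,x_1}.
\]
Now I would expand $1+\theta^2=1+\lambda_2^2+\mathcal{O}(\varepsilon)$, so that $\bigl((1+\lambda_1^2)/(1+\theta^2)\bigr)^{(m-1)/2}=\bigl((1+\lambda_1^2)/(1+\lambda_2^2)\bigr)^{(m-1)/2}+\mathcal{O}(\varepsilon)$. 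Since Proposition~\ref{proplem2} gives that the standard ratio is $C\varepsilon+\mathcal{O}(\varepsilon^2)$, multiplying the two expansions produces exactly \eqref{eq:cr2}.

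The only subtle point I anticipate is organizing the powers of $(1+\lambda_1^2)$ correctly when the $\beta_1^m$ from $P(\alpha_1,\beta_1)$ combines with the $\sqrt{1+\lambda_1^2}$ coming from the denominator $\alpha_1\widetilde\beta_2-\widetilde\alpha_2\beta_1$ and with the $(1+\lambda_1^2)^{(m-2)/2}$ arising from Lemma~\ref{lem1}; but once the exponents are tallied, the $(m-1)/2$ in $(1+\lambda_1^2)$ emerges naturally, matching the corresponding factor from $(1+\theta^2)^{-(m-1)/2}$ and producing the chordal-type ratio in the leading coefficient.
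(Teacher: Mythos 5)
Your proof is correct, but it takes a genuinely different route from the paper's. The paper perturbs the homogeneous coordinates directly: it computes $\delta\alpha_2$ and $\delta\beta_2$, does a multivariate Taylor expansion of $P(\widetilde\alpha_2,\widetilde\beta_2)$ around $(\alpha_2,\beta_2)$, recognizes $\delta\alpha_2\,D_\alpha P+\delta\beta_2\,D_\beta P$ as proportional to $DP(\alpha_2,\beta_2)$, and then applies Lemma~\ref{lem1} at both $\lambda_2$ and $\lambda_1$ to land on $C$. You instead establish the \emph{exact} (non-asymptotic) identity
\[
\frac{y_1^*\,P[(\alpha_1,\beta_1),(\widetilde\alpha_2,\widetilde\beta_2)]\,v}{y_1^*DP(\alpha_1,\beta_1)\,x_1}
=\left(\frac{1+\lambda_1^2}{1+\theta^2}\right)^{(m-1)/2}\frac{y_1^*\,P[\lambda_1,\theta]\,v}{y_1^*P'(\lambda_1)\,x_1},
\]
valid for any $\theta\ne\lambda_1$, by exploiting $y_1^*P(\lambda_1)=0$ so that only the $P(\theta)$ term survives in both divided differences; the asymptotics then come for free from Proposition~\ref{proplem2} together with $1+\theta^2=1+\lambda_2^2+\calo(\varepsilon)$. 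Your bookkeeping checks out: $\tfrac12+\tfrac{m-2}{2}=\tfrac{m-1}{2}$, and the denominator $\alpha_1\widetilde\beta_2-\widetilde\alpha_2\beta_1=(\lambda_1-\theta)/\sqrt{(1+\lambda_1^2)(1+\theta^2)}$ is as you state (note that Lemma~\ref{lem1} applies to the denominator because $P(\lambda_1)x_1=0$, and the realness assumptions are needed so that $\beta_1^{m-2}(|\alpha_1|^2+|\beta_1|^2)=(1+\lambda_1^2)^{-(m-2)/2}$). What your approach buys is a cleaner derivation that reuses Proposition~\ref{proplem2} rather than redoing a Taylor expansion, and, more importantly, an exact quantitative form of the remark preceding Lemma~\ref{lem1}: the homogeneous and standard criteria differ precisely by the magnitude-dependent factor $\bigl((1+\lambda_1^2)/(1+\theta^2)\bigr)^{(m-1)/2}$. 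The paper's computation, on the other hand, illustrates the technique of perturbing in homogeneous coordinates, which is the one that would generalize if one did not already have the nonhomogeneous result at hand.
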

\begin{proof}
It follows from an expansion that up to ${\cal O}(\varepsilon^2)$-terms
\begin{align*}
\delta\alpha_2 & := \widetilde \alpha_2 - \alpha_2 =
{\phi \over (1+\lambda_2^2)^{3/2}} \,\varepsilon = {\phi \over 1+\lambda_2^2}\,\beta_2\,\varepsilon,\\
\delta\beta_2 & := \widetilde \beta_2 - \beta_2
= -{\lambda_2 \, \phi \over (1+\lambda_2^2)^{3/2}} \,\varepsilon
= -{\phi \over 1+\lambda_2^2} \,\alpha_2\,\varepsilon.
\end{align*}
For the numerator of \eqref{eq:cr2} a multivariate Taylor series expansion gives,
omitting the ${\cal O}(\varepsilon^2)$-terms,
\begin{align*}
y_1^*&P[(\alpha_1,\beta_1),(\widetilde \alpha_2,\widetilde \beta_2)]\,v\\
&= -\frac{y_1^*P(\alpha_2,\beta_2)w \, \varepsilon + y_1^*(\delta\alpha_2 \, D_\alpha P(\alpha_2,\beta_2)
+ \delta\beta_2 \, D_\beta P(\alpha_2,\beta_2))\,x_2}{\alpha_1\widetilde\beta_2 - \widetilde\alpha_2 \beta_1} \\
&=
{(1+\lambda_1^2)^{1/2} \, (1+\lambda_2^2)^{1/2}\over \lambda_2-\lambda_1}
\ y_1^*\big(P(\alpha_2,\beta_2)w +
{\phi\over 1+\lambda_2^2} \, DP(\alpha_2,\beta_2)\,x_2\big)\,\varepsilon\\
&=
{(1+\lambda_1^2)^{1/2}\over
(\lambda_2-\lambda_1)(1+\lambda_2^2)^{(m-1)/2}}
\left(y_1^*P(\lambda_2)w + \phi \, y_1^*P'(\lambda_2)\,x_2\right) \varepsilon,
\end{align*}
where we have applied Lemma \ref{lem1}. If we also use Lemma~\ref{lem1}
for the denominator of \eqref{eq:cr2} and combine the results,
we obtain \eqref{eq:cr2}.
\end{proof}

Finally, we give a result that elegantly connects the homogeneous divided differences
with divided difference of the standard QEP and of the {\em reverse QEP}
defined by $\lambda^2 Q(\lambda^{-1}) = A + \lambda B + \lambda^2 C$.
Let $\lambda$ and $\theta$ be given in homogeneous coordinates by
$(\alpha_1, \beta_1)$ and $(\alpha_2, \beta_2)$, respectively.
By Proposition~\ref{prop:divdifqep}, the divided difference expression for the QEP is
$(\lambda+\theta)A+B$, which in homogeneous coordinates corresponds to
\[
D_1 = (\alpha_1 \beta_2 + \alpha_2 \beta_1) A + \beta_1 \beta_2 B.
\]
A divided difference expression for the reverse QEP is
$B + (\lambda^{-1} + \theta^{-1}) C$, or
\[
D_2 = \alpha_1 \alpha_2 B + (\alpha_1 \beta_2 + \alpha_2 \beta_1) C
\]
in homogeneous coordinates.
The numerator of the homogeneous divided differences as defined in Definition~\ref{def:homdd} is
\[
D = (\alpha_1^2-\alpha_2^2)A + (\alpha_1 \beta_1-\alpha_2 \beta_2)B + (\beta_1^2-\beta_2^2)C.
\]
It may be checked that
\[
D = \frac{\alpha_1^2-\alpha_2^2}{\alpha_1 \beta_2 + \alpha_2 \beta_1} \, D_1
+ \frac{\beta_1^2-\beta_2^2}{\alpha_1 \beta_2 + \alpha_2 \beta_1} \, D_2.
\]
Therefore, the homogeneous approach may be viewed as a mediator
between the divided differences of the QEP and reversed QEP.
(We note that in homogeneous Jacobi--Davidson \cite{HNo07}, the homogeneous vector
used in the subspace expansion has also been shown to be a mediator
between those arising in the standard and reverse QEP.)
In fact, it may be shown that a similar expression also holds for the PEP \eqref{pep}.
This result illustrates a mathematically pleasant property
of homogeneous coordinates.


\section{Multiparameter eigenvalue problems}
\label{sec:mep}
As discussed in the introduction, linear two-parameter eigenvalue problems
have been the origin of our interest in selection criteria \cite{HPl02, HKP05}.
We briefly review various previous results, whereby we also improve
on our previously proposed criteria. We will keep the discussion as concise as possible,
referring to the given references for more information.

Consider the linear two-parameter eigenvalue problem
\begin{align*}
(A_1 - \lambda B_1 -\mu C_1) \, \bx_1 & = \zero, \\
(A_2 - \lambda B_2 -\mu C_2) \, \bx_2 & = \zero,
\end{align*}
where the task is to find one or more eigenvalues $(\lambda, \mu)$ together with
their eigenvectors of the form $\bx_1 \otimes \bx_2$.
We first briefly follow \cite{HKP05}.
Let $\Delta_0 = B_1 \otimes C_2 - C_1 \otimes B_2$, which we assume to be nonsingular.
A left eigenvector $\by_1 \otimes \by_2$ and right eigenvector
$\widetilde \bx_1 \otimes \widetilde \bx_2$
corresponding to different simple eigenvalues $(\lambda_1, \mu_1)$
and $(\lambda_2, \mu_2)$, respectively, are $\Delta_0$-orthogonal:
\[
(\by_1 \otimes \by_2)^* \Delta_0 (\widetilde \bx_1 \otimes \widetilde \bx_2)
= (\by_1^* B_1 \widetilde \bx_1)(\by_2^* C_2 \widetilde \bx_2)
  - (\by_1^* C_1 \widetilde \bx_1)(\by_2^* B_2 \widetilde \bx_2) = 0.
\]
For a selection criterion, we would like an approximate eigenvector
$\bv_1 \otimes \bv_2$ to be sufficiently $\Delta_0$-orthogonal to already
detected left eigenvectors $\by_1^{(i)} \otimes \by_2^{(i)}$, $i = 1, \dots, d$.
In our previous criterion, as was proposed in \cite{HKP05}, we required
potential $\bv_1 \otimes \bv_2$ to satisfy
\begin{equation}
\label{mepstrict}
\max_{i=1,\dots,d} \ (\by_1^{(i)} \otimes \by_2^{(i)})^* \Delta_0 (\bv_1 \otimes \bv_2)
< \tfrac{1}{2} \cdot \min_{i=1,\dots,d} \
(\by_1^{(i)} \otimes \by_2^{(i)})^* \Delta_0 (\bx_1^{(i)} \otimes \bx_2^{(i)}).
\end{equation}
While criterion \eqref{mepstrict} has turned out to perform satisfactorily in the
numerical tests in \cite{HPl02, HKP05},
it may be unnecessarily strict:
if one eigenvalue has been detected with right and left eigenvector
$\bx_1 \otimes \bx_2$ and $\by_1 \otimes \by_2$
for which the right-hand side of \eqref{mepstrict}
is small, the selection procedure may reject many or all candidate Ritz pairs.

Therefore, instead of \eqref{mepstrict}, we propose the new modified criterion
(cf.~\eqref{critNEP})
\begin{equation}
\label{mepnew}
\max_{i=1,\dots,d} \
\frac{(\by_1^{(i)} \otimes \by_2^{(i)})^* \Delta_0 (\bv_1 \otimes \bv_2)}
{(\by_1^{(i)} \otimes \by_2^{(i)})^* \Delta_0 (\bx_1^{(i)} \otimes \bx_2^{(i)})}
< \eta,
\end{equation}
with, e.g., $\eta = 0.1$.
This criterion has been successfully used very recently in \cite{HMMP19}.

In \cite{HPl02} the special but important right-definite case has been treated,
where all matrices $A_i$, $B_i$, and $C_i$ are Hermitian, and
$\Delta_0$ is positive definite. In this situation, the right and left eigenvectors coincide,
and therefore eigenvectors $\bx_1 \otimes \bx_2$ and
$\widetilde \bx_1 \otimes \widetilde \bx_2$
corresponding to different eigenvalues are $\Delta_0$-orthogonal:
$(\bx_1 \otimes \bx_2)^* \Delta_0 (\widetilde \bx_1 \otimes \widetilde \bx_2) = 0$.
We note that \cite{HPl02} has been the first paper where a selection criterion
to compute several eigenvalues has been proposed and used,
in the context of linear two-parameter eigenproblems.

Besides being simple and easy to implement, selection criteria can be elegantly
extended to other types of (multiparameter) eigenvalue problems.
The $\Delta_0$-orthogonality can be nicely extended to
nonlinear two-parameter eigenvalue problems as follows.

For the polynomial and general nonlinear two-parameter eigenvalue problem
\begin{align*}
T_1(\lambda, \mu) \, \bx_1 & = \zero, \\
T_2(\lambda, \mu) \, \bx_2 & = \zero,
\end{align*}
we have introduced in \cite{HMP15} a generalized divided difference
\[
T[(\lambda_1, \mu_1), (\lambda_2, \mu_2)]
 = \left| \begin{array}{ccc}
\ds \lim_{\lambda\to \lambda_2} \ts
\frac{T_1(\lambda, \mu_1) - T_1(\lambda_1, \mu_1)}{\lambda-\lambda_1} &
\ \ds \lim_{\mu\to \mu_2} \ts
\frac{T_1(\lambda_2, \mu) - T_1(\lambda_2, \mu_1)}{\mu-\mu_1} \\
\ \ds \lim_{\lambda\to \lambda_2} \ts
\frac{T_2(\lambda, \mu_1) - T_2(\lambda_1, \mu_1)}{\lambda-\lambda_1} &
\ \ds \lim_{\mu\to \mu_2} \ts
\frac{T_2(\lambda_2, \mu) - T_2(\lambda_2, \mu_1)}{\mu-\mu_1}
\end{array} \right|_\otimes,
\]
where \mbox{\scriptsize $\left|\!\!
\begin{array}{cc}
A& B\\ C & D
\end{array}
\!\!\right|_\otimes$} stands for the operator determinant $A\otimes D - B \otimes C$;
see also \cite{Ple16}.
In these papers, it has been shown that this divided difference
has the desired property that the quantity
\[
(\by_1^{(i)} \otimes \by_2^{(i)})^* \, T[(\lambda_i, \mu_i), (\theta, \eta)]
\, (\bv_1 \otimes \bv_2)
\]
is nonzero when $(\theta, \eta)$ converges to $(\lambda_i, \mu_i)$,
while it is 0 when the pair converges to another eigenvalue.

We now illustrate the adaptivity and flexibility of the selection criterion
by the following generalization for the differentiable nonlinear three-parameter eigenvalue problem
\begin{align}
T_1(\lambda, \mu, \nu) \, \bx_1 & = \zero, \nonumber \\
T_2(\lambda, \mu, \nu) \, \bx_2 & = \zero, \label{3EP} \\
T_3(\lambda, \mu, \nu) \, \bx_3 & = \zero, \nonumber
\end{align}
While the special case of a linear case of this problem
has been treated recently in \cite[Lem.~4.2]{HMMP19},
we now define a divided difference for the nonlinear case \eqref{3EP}.

\begin{definition}
We define the divided difference $T[(\lambda_1, \mu_1, \nu_1), (\lambda_2, \mu_2, \nu_2)]$
for problem \eqref{3EP} by
\[
{\scriptsize
\left| \begin{array}{ccc}
\ds \lim_{\lambda\to \lambda_2} \ts \frac{T_1(\lambda,\mu_1,\nu_1) - T_1(\lambda_1,\mu_1,\nu_1)}{\lambda-\lambda_1} &
\ \ds \lim_{\mu\to \mu_2} \ts \frac{T_1(\lambda_2,\mu,\nu_1) - T_1(\lambda_2,\mu_1,\nu_1)}{\mu-\mu_1} &
\ \ds \lim_{\nu\to \nu_2} \ts \frac{T_1(\lambda_2,\mu_2,\nu) - T_1(\lambda_2,\mu_2,\nu_1)}{\nu-\nu_1} \\
\ds \lim_{\lambda\to \lambda_2} \ts \frac{T_2(\lambda,\mu_1,\nu_1) - T_2(\lambda_1,\mu_1,\nu_1)}{\lambda-\lambda_1} &
\ \ds \lim_{\mu\to \mu_2} \ts \frac{T_2(\lambda_2,\mu,\nu_1) - T_2(\lambda_2,\mu_1,\nu_1)}{\mu-\mu_1} &
\ \ds \lim_{\nu\to \nu_2} \ts \frac{T_2(\lambda_2,\mu_2,\nu) - T_2(\lambda_2,\mu_2,\nu_1)}{\nu-\nu_1} \\
\ds \lim_{\lambda\to \lambda_2} \ts \frac{T_3(\lambda,\mu_1,\nu_1) - T_3(\lambda_1,\mu_1,\nu_1)}{\lambda-\lambda_1} &
\ \ds \lim_{\mu\to \mu_2} \ts \frac{T_3(\lambda_2,\mu,\nu_1) - T_3(\lambda_2,\mu_1,\nu_1)}{\mu-\mu_1} &
\ \ds \lim_{\nu\to \nu_2} \ts \frac{T_3(\lambda_2,\mu_2,\nu) - T_3(\lambda_2,\mu_2,\nu_1)}{\nu-\nu_1}
\end{array} \right|_\otimes.
}
\]
\end{definition}

The following results justify this definition.

\begin{proposition}
The quantity
\[
(\by_1 \otimes \by_2 \otimes \by_3)^* \,
T[(\lambda_1, \mu_1, \nu_1), (\lambda_2, \mu_2, \nu_2)] \,
(\bx_1 \otimes \bx_2 \otimes \bx_3)
\]
is nonzero when $(\lambda_2, \mu_2, \nu_2) = (\lambda_1, \mu_1, \nu_1)$ is a simple
eigenvalue with right and left eigenvector $\bx_1 \otimes \bx_2 \otimes \bx_3$
and $\by_1 \otimes \by_2 \otimes \by_3$, respectively;
it equals 0 when $\bx_1 \otimes \bx_2 \otimes \bx_3$ and
$\by_1 \otimes \by_2 \otimes \by_3$ belong to different eigenvalues
$(\lambda_2, \mu_2, \nu_2) \ne (\lambda_1, \mu_1, \nu_1)$.
\end{proposition}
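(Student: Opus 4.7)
The plan is to reduce the contraction to a scalar $3\times 3$ determinant and then analyze the two cases separately. By the Leibniz-type expansion of the operator determinant,
\[
|M|_\otimes = \sum_{\sigma\in S_3}\mathrm{sgn}(\sigma)\, M_{1\sigma(1)}\otimes M_{2\sigma(2)}\otimes M_{3\sigma(3)},
\]
contracting on the right by $\bx_1\otimes\bx_2\otimes\bx_3$ and on the left by $\by_1\otimes\by_2\otimes\by_3$ produces $\det\Phi$, where $\Phi$ is the scalar $3\times 3$ matrix with entries $\Phi_{ij}=\by_i^* M_{ij}\bx_i$ and $M_{ij}$ is the $(i,j)$-divided difference appearing in the definition of $T[\cdot,\cdot]$. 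The rest of the argument analyzes this scalar determinant.

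For the equal case $(\lambda_1,\mu_1,\nu_1)=(\lambda_2,\mu_2,\nu_2)$, each defining limit collapses to the corresponding partial derivative, so $\Phi = [\,\by_i^* D_{\kappa_j}T_i(\lambda_1,\mu_1,\nu_1)\bx_i\,]$ with $\kappa_1=\lambda$, $\kappa_2=\mu$, $\kappa_3=\nu$. The non-vanishing of $\det\Phi$ here is the standard characterization of simplicity for a three-parameter nonlinear eigenproblem, the direct analogue of Proposition~\ref{prop:Fprime} and of the linear three-parameter criterion in \cite[Lem.~4.2]{HMMP19}, and this gives the first claim.

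For distinct eigenvalues, I would set $f_i(\lambda,\mu,\nu):=\by_i^* T_i(\lambda,\mu,\nu)\bx_i$ and use the two identities $f_i(\lambda_1,\mu_1,\nu_1)=0$ (from $\by_i^* T_i(\lambda_1,\mu_1,\nu_1)=0$) and $f_i(\lambda_2,\mu_2,\nu_2)=0$ (from $T_i(\lambda_2,\mu_2,\nu_2)\bx_i=0$). Introducing the two intermediates $p_i=f_i(\lambda_2,\mu_1,\nu_1)$ and $q_i=f_i(\lambda_2,\mu_2,\nu_1)$, the three entries of row $i$ telescope the change of $f_i$ along the path $(\lambda_1,\mu_1,\nu_1)\to(\lambda_2,\mu_1,\nu_1)\to(\lambda_2,\mu_2,\nu_1)\to(\lambda_2,\mu_2,\nu_2)$. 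In the generic subcase where $\lambda_1\neq\lambda_2$, $\mu_1\neq\mu_2$, $\nu_1\neq\nu_2$ this yields
\[
\Phi_{i,\cdot} = \Bigl[\ts\tfrac{p_i}{\lambda_2-\lambda_1},\ \tfrac{q_i-p_i}{\mu_2-\mu_1},\ \tfrac{-q_i}{\nu_2-\nu_1}\Bigr] = p_i\,\bu_1 + q_i\,\bu_2,
\]
where $\bu_1$ and $\bu_2$ are two fixed row vectors independent of $i$. Hence all three rows of $\Phi$ lie in a common two-dimensional subspace of $\C^3$, so $\mathrm{rank}\,\Phi\le 2$ and $\det\Phi=0$.

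The main obstacle I expect is the edge subcases where some, but not all, of the three coordinates coincide: the corresponding column of $\Phi$ is then defined by a partial derivative rather than a first-order difference, so the rank-two decomposition above does not apply entry-wise. The cleanest fix should be a continuity argument: each $M_{ij}$ (and hence $\det\Phi$) is jointly continuous in $(\lambda_1,\mu_1,\nu_1)$ and $(\lambda_2,\mu_2,\nu_2)$, so the identity $\det\Phi=0$ obtained in the generic subcase extends by continuity to any coincident subcase, yielding the second claim in full.
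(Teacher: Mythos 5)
Your reduction to the scalar determinant $\det\Phi$ with $\Phi_{ij}=\by_i^*M_{ij}\bx_i$, and your treatment of the two main cases, coincide with the paper's argument: the equal case is settled by the same appeal to the derivative--determinant characterization of simplicity (the paper invokes a generalization of \cite[Prop.~3.2]{MPl09}), and in the all-distinct case your decomposition $\Phi_{i,\cdot}=p_i\bu_1+q_i\bu_2$ is precisely the paper's observation that, after clearing the denominators $(\lambda_2-\lambda_1)^{-1}(\mu_2-\mu_1)^{-1}(\nu_2-\nu_1)^{-1}$, the three columns sum to zero. The genuine gap is your continuity argument for the partially coincident subcases. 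The vanishing of $\det\Phi$ in the generic subcase is \emph{not} an identity in the six scalar variables: it holds only because $f_i(\lambda_1,\mu_1,\nu_1)=f_i(\lambda_2,\mu_2,\nu_2)=0$, i.e., because both tuples are eigenvalues carrying the prescribed left and right eigenvectors. For a regular problem the eigenvalues form a discrete set, so a configuration in which some coordinates coincide is not the limit of configurations in which all six coordinates are distinct \emph{and the hypotheses of the generic case still hold}; there is no family of vanishing determinants for continuity to act on. Concretely, if you perturb $(\lambda_2,\mu_2,\nu_2)$ to a nearby non-eigenvalue $(\ell,m,n)$, the telescoping sum of row $i$ weighted by the denominators equals $f_i(\ell,m,n)\ne 0$, and the rank-two structure is destroyed, so $\det\Phi$ is generically nonzero in any neighborhood.

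Fortunately your own bookkeeping disposes of the edge cases directly, which is exactly what the paper does. If the tuples agree in exactly one coordinate, say $\nu_1=\nu_2$, then $q_i=f_i(\lambda_2,\mu_2,\nu_1)=f_i(\lambda_2,\mu_2,\nu_2)=0$, so the first two columns of $\Phi$ become $p_i/(\lambda_2-\lambda_1)$ and $-p_i/(\mu_2-\mu_1)$ and are proportional, whence $\det\Phi=0$ regardless of the third (now derivative) column; the other two single-agreement cases are symmetric (the two columns belonging to the non-agreeing coordinates always differ by a constant factor). If the tuples agree in two coordinates, say $\mu_1=\mu_2$ and $\nu_1=\nu_2$, then $p_i=f_i(\lambda_2,\mu_1,\nu_1)=f_i(\lambda_2,\mu_2,\nu_2)=0$ and the first column vanishes outright. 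Replacing the continuity step by these two observations completes the proof and brings it in line with the paper's.
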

\begin{proof}
A rather straightforward generalization of \cite[Prop.~3.2]{MPl09} shows that
\begin{align*}
& (\by_1 \otimes \by_2 \otimes \by_3)^* \,
T[(\lambda_1, \mu_1, \nu_1), (\lambda_1, \mu_1, \nu_1)] \,
(\bx_1 \otimes \bx_2 \otimes \bx_3) \\[1mm]
& \phantom{M} =
{\footnotesize
\left|
\begin{array}{ccc}
\by_1^* \frac{\partial T_1}{\partial \lambda} \bx_1 &
\quad \by_1^* \frac{\partial T_1}{\partial \mu} \bx_1 &
\quad \by_1^* \frac{\partial T_1}{\partial \nu} \bx_1 \\[1mm]
\by_2^* \frac{\partial T_2}{\partial \lambda} \bx_2 &
\quad \by_2^* \frac{\partial T_2}{\partial \mu} \bx_2 &
\quad \by_2^* \frac{\partial T_2}{\partial \nu} \bx_2 \\[1mm]
\by_3^* \frac{\partial T_3}{\partial \lambda} \bx_3 &
\quad \by_3^* \frac{\partial T_3}{\partial \mu} \bx_3 &
\quad \by_3^* \frac{\partial T_3}{\partial \nu} \bx_3 \\
\end{array}
\right| \ne 0.
}
\end{align*}
When $\lambda_2 \ne \lambda_1$, $\mu_2 \ne \mu_1$, and $\nu_2 \ne \nu_1$,
\begin{align*}
& (\by_1 \otimes \by_2 \otimes \by_3)^* \,
T[(\lambda_1, \mu_1, \nu_1), (\lambda_2, \mu_2, \nu_2)] \,
(\bx_1 \otimes \bx_2 \otimes \bx_3) = \\[1mm]
& \phantom{M}(\lambda_2-\lambda_1)^{-1}(\mu_2-\mu_1)^{-1}(\nu_2-\nu_1)^{-1} \cdot \\[1mm]
& \phantom{MM} {\scriptsize
\left| \begin{array}{ccc}
\by_1^*T_1(\lambda_2,\mu_1,\nu_1)\bx_1 &
\quad \by_1^*(T_1(\lambda_2,\mu_2,\nu_1) - T_1(\lambda_2,\mu_1,\nu_1))\bx_1 &
\quad -\by_1^*T_1(\lambda_2,\mu_2,\nu_1)\bx_1 \\[1mm]
\by_2^*T_2(\lambda_2,\mu_1,\nu_1)\bx_2 &
\quad \by_2^*(T_2(\lambda_2,\mu_2,\nu_1) - T_2(\lambda_2,\mu_1,\nu_1))\bx_2 &
\quad -\by_2^*T_2(\lambda_2,\mu_2,\nu_1)\bx_2 \\[1mm]
\by_3^*T_3(\lambda_2,\mu_1,\nu_1)\bx_3 &
\quad \by_3^*(T_3(\lambda_2,\mu_2,\nu_1) - T_3(\lambda_2,\mu_1,\nu_1))\bx_3 &
\quad -\by_3^*T_3(\lambda_2,\mu_2,\nu_1)\bx_3 \\[1mm]
\end{array} \right| = 0,
}
\end{align*}
since the sum of the columns is the zero vector.
Finally, when some, but not all, of the coordinates of
$(\lambda_1,\mu_1,\nu_1)$ are equal to $(\lambda_2,\mu_2,\nu_2)$, the determinant vanishes as well.
Indeed, it can be checked that:
\begin{itemize}
\item when $(\lambda_1, \mu_1, \nu_1)$ and $(\lambda_2, \mu_2, \nu_2)$ agree in one of three
coordinates then the columns where the coordinates do not agree differ by a factor $-1$;
\item when $(\lambda_1, \mu_1, \nu_1)$ and $(\lambda_2, \mu_2, \nu_2)$ agree in two of three
coordinates then the column where the coordinates do not agree is zero.
\end{itemize}
\end{proof}

This result implies that selection criteria in the line of \eqref{mepnew} can
be exploited.

For multiparameter eigenvalue problems, locking becomes less and less attractive
as the number of parameters increases. For instance, when we are prepared to solve projected
eigenvalue problems of dimension approximately 100 at the subspace extraction step,
the search spaces are limited to dimension 10 for two parameters, and even to dimension
5 for three parameters.
As locking keeps the converged vectors in the search space, this technique is
generally not an option for MEPs.


Therefore, selection effectively creates more space in the subspaces to contain new information.
However, even when using selection criteria to compute several eigenvalues,
already detected vectors may sometimes turn up in the search space in practice.
Therefore, we will be limited by the size of the search space at some point,
and we cannot expect to compute arbitrarily many eigenvalues.

\section{Comparison with other approaches}
\label{sec:compare}
A good comparison of various approaches has already been carried out in \cite{Eff13}.
Here we briefly discuss differences of selection criteria compared to other methods for
computing several eigenvalues of one-parameter eigenvalue problems.

Besides our selection criteria, there are several alternatives for the computation
of several eigenvalues for nonlinear eigenvalue problems and linear and nonlinear
multiparameter eigenvalue problems.
Nonequivalence deflation \cite{Lin86, GLW95a, GLW95b}
has an elegant mathematical foundation, but changes the
original problem, and might suffer from instabilities.
Block methods may be used to compute several eigenvalues simultaneously \cite{Kre09},
but also has some drawbacks as indicated in \cite{Eff13}.
Locking, which keeps the eigenvectors in the search space \cite{Mee01} \cite[Ch.~6]{Eff13},
leaves less space for new vectors; to find new eigenvectors, the search spaces have to grow.
Especially for multiparameter eigenvalue problems, where the dimension of
the projected problems grows as $n^p$, with $p$ the number of parameters,
locking is not a realistic alternative.

We will now discuss differences with the method by Effenberger \cite{Eff13},
which we consider state-of-the-art and of particular importance, in more detail.
This method, as our approach, also computes the eigenvalues successively while preventing
convergence to the same eigenpairs.
However, this method and the one proposed here are still of very different nature.
First, the method in \cite{Eff13} is far from trivial to implement.
During the computations, it modifies the original problem by adding rows and columns
so that the problem size steadily increases.
It is unable to deal with infinite eigenvalues,
as it does not use homogeneous coordinates.
Moreover, it is an open question if the approach can be generalized to
multiparameter eigenvalue problems.
As a big advantage, Effenberger's method has been designed with the aim of
also computing multiple and clustered eigenvalues in a stable way.
Our proposed approach, on the other hand, is (much) simpler, both conceptually
and with respect to implementation (just a few lines of codes on
top of an existing code).
Our method is designed to handle infinite eigenvalues by homogeneous coordinates,
and the problem remains unmodified during the iterations.
Also, the techniques are elegantly generalizable to various types of eigenproblems.
On the other hand, as stated before,
the method is not suitable to compute multiple eigenvalues.

We note that standard deflation methods (see Section~\ref{sec:intro})
for the generalized eigenvalue problems
can be used for polynomial one-parameter and multiparameter eigenvalue problems
when one is prepared to linearize the problem into a (much) larger problem.
For instance, in \cite{MPl15}, a Krylov--Schur type method has been proposed
for the linear two-parameter eigenvalue problem, which works on the operators
$\Delta_0^{-1} \Delta_1$ or $\Delta_0^{-1} \Delta_2$.
A main disadvantage of this approach is that it works on vectors of length
$n^2$, instead of $n$ for a direct approach.
The action with the $\Delta_0^{-1} \Delta_i$ operators can be done in
$\calo(n^3)$ effort instead of the expected $\calo(n^6)$ by solving a Sylvester equation.
Therefore, when $n$ is small enough, this method may still be worthwhile
for two-parameter problems. For three-parameter problems, the situation looks
far less favorable \cite{HMMP19}.

\section{Numerical examples}
\label{sec:num}
We present some numerical examples obtained with Matlab.
Several successful experiments with several types of multiparameter eigenvalue problems
have been carried out and described in \cite{HPl02, HKP05, HMP15, Ple16, HMMP19}.
Therefore, we concentrate ourselves mostly on the new use for polynomial eigenvalue problems.

\begin{experiment}
\label{exp1}
{\rm We consider the QEP {\sf utrecht1331} with target $\tau = -70 - 2000i$
as in \cite{HSl08}, and an exact LU preconditioner based on this target.
This is a quite challenging interior eigenvalue problem, due to the difficult spectrum, the
interior location of the target, and the different scales of the real and imaginary parts;
see Fig.~\ref{fig:1}(a). Approximate eigenpairs $(\theta, \bv)$ are computed to
relative tolerance $10^{-6}$, meaning
\[
\|\br\| := \|Q(\theta)\bv\| \le 10^{-6} \cdot (\,|\theta|^2 \, \|A\|_1 + |\theta| \, \|B\|_1 + \|C\|_1\,).
\]
At first, we take selection threshold $\eta = 0.1$ in \eqref{QEPcrit}.
We use the Jacobi--Davidson method with harmonic extraction \cite{HSl08} and
10 steps of {\sf bicgstab} to solve the correction equations.
The left eigenvectors are solved by an exact solve with $Q(\theta)^*$ when $\theta$
has sufficiently converged.
For the value extraction we use the one-dimensional Galerkin {\sf gal1} approach from \cite{HVo03}.
With minimum and maximum subspace sizes of 20 and 40, we find 12 eigentriplets
in 200 iterations; the convergence history is displayed in Fig.~\ref{fig:1}(b).
The eigenvalues are detected after 10, 12, 14, 16, 78, 96, 105, 118, 133, 147, 165, and 178 iterations.
Elegantly, when we sort the eigenvalues with respect to distance to the target,
these are eigenvalues number 1 through 12, in this order!
The longer ``hiccup'' after several eigenpairs (here the 5th) may occur in many problems,
and is likely due to the fact that new information needs to be inserted in the search space.
We note that it seems important that the search spaces are allowed to be sufficiently large;
otherwise, at some point, the convergence may stop altogether.
For instance, using minimal subspace size 15 and maximal subspace size 25, only 4 eigenvalues are detected
in 200 iterations, with indices 5, 7, 6, and 10.
Favorably, the process seems to be not very sensitive with respect to the precise threshold value of $\eta$:
the choices of $\eta = 0.01$, $0.2$, and $0.5$ result in 9, 13, and 13 found eigenpairs, respectively.

\begin{figure}[!htbp]
\centering
  \includegraphics[scale=0.43]{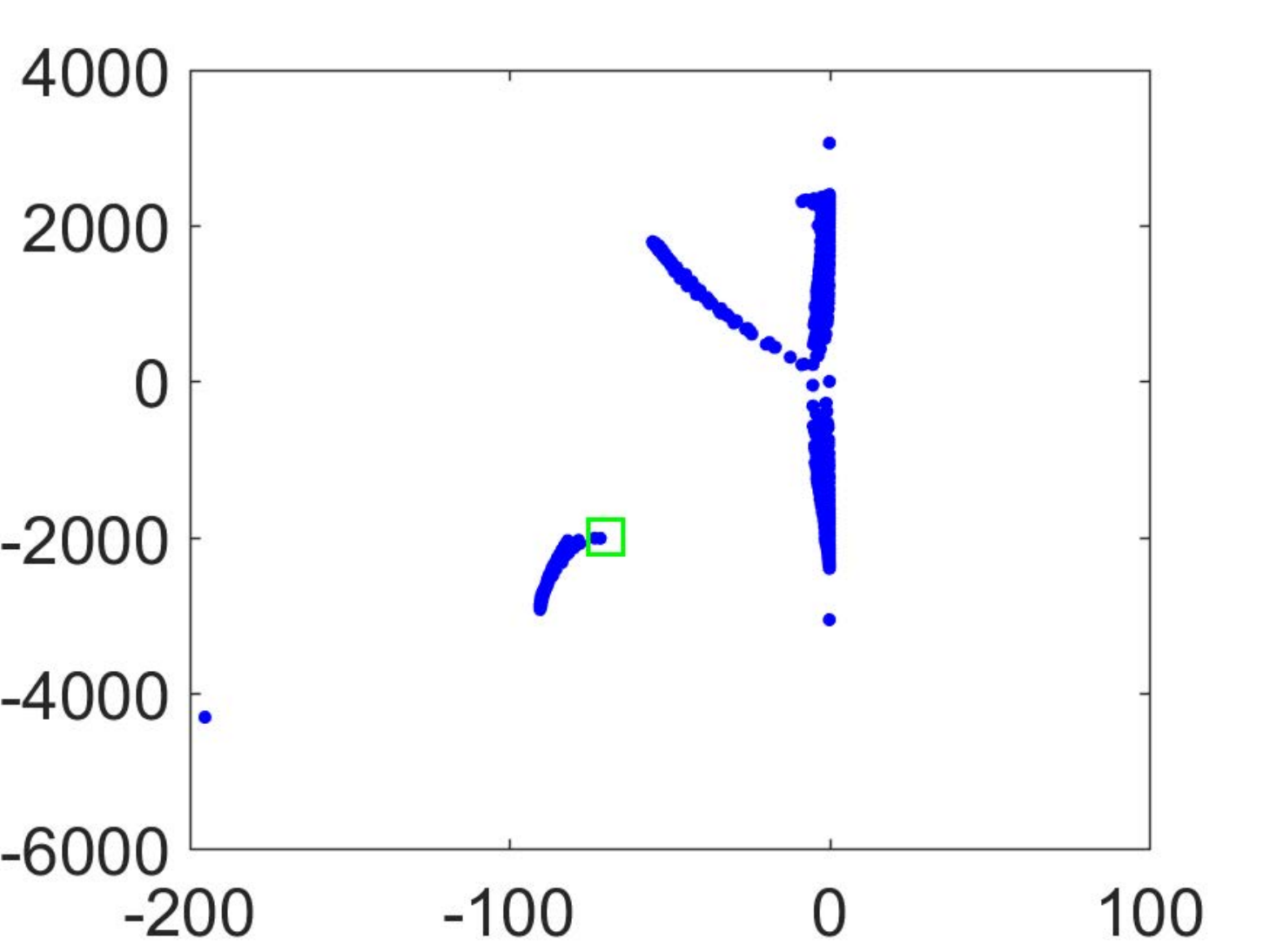} \
  \includegraphics[scale=0.43]{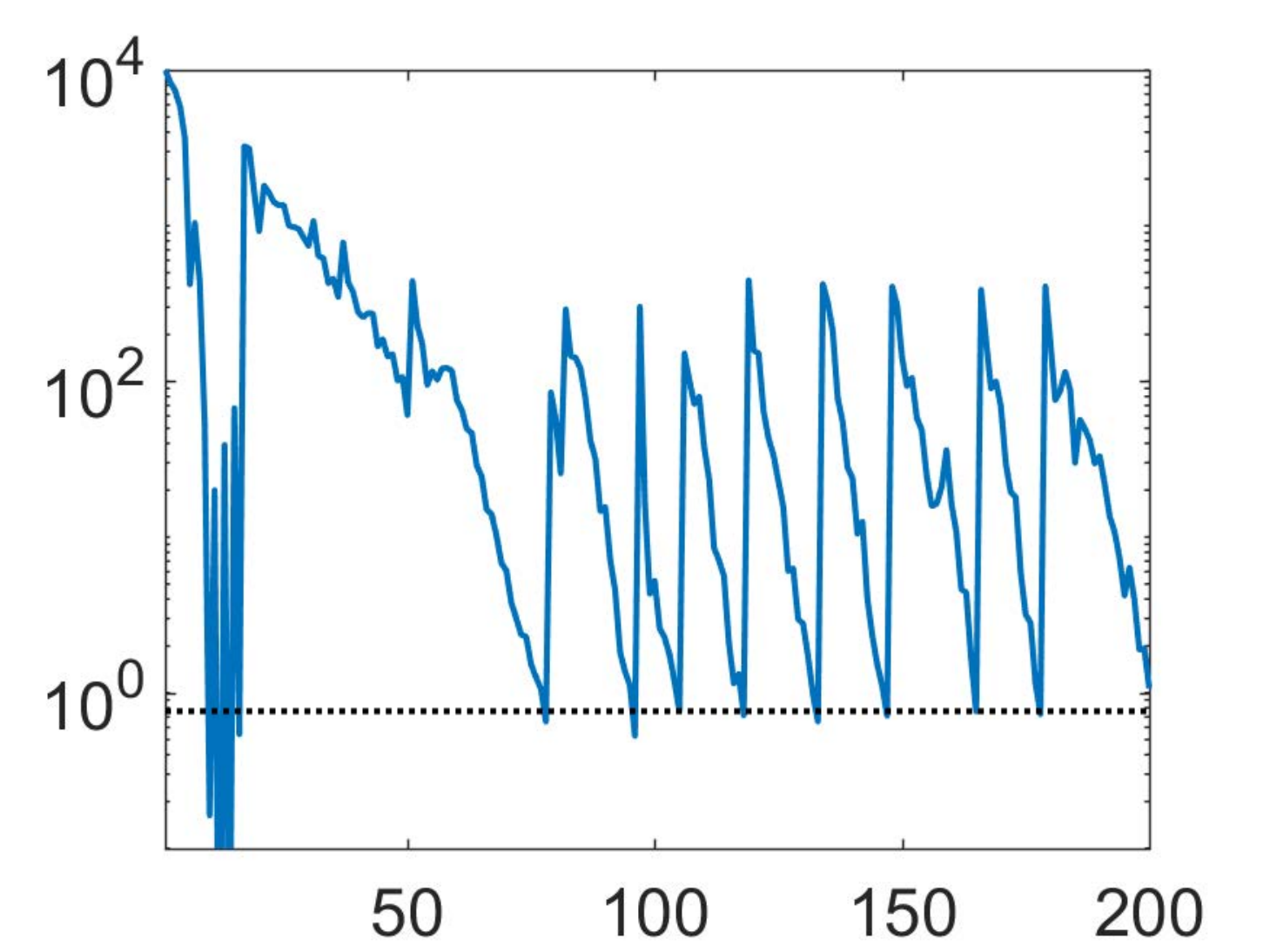}
\caption{(a) Spectrum and target of {\sf utrecht1331};
(b) Convergence history of 12 converged eigenpairs.}
\label{fig:1}
\end{figure}

Although the problem does not have infinite eigenvalues, we may also use the
homogeneous divided differences of Section~\ref{sec:homo}.
Note that this method is different from the standard divided difference.
In this case, we also find 12 eigenpairs in 200 iterations, after
10, 12, 15, 70, 81, 91, 107, 124, 143, 154, 170, and 190 iterations, respectively.
}
\end{experiment}

\begin{experiment}{\rm
We consider a popular challenge:
the problem {\sf gyroscopic}, a model of a gyroscopic dynamical system,
of size $n = 10000$; cf.~\cite[p.~654]{BSu05}.
Here, $A$ is diagonal with elements uniformly from $[0,1]$ with additionally $a_{11} = 0$,
$B$ is tridiagonal with $-1$s on the subdiagonal and $1$s on the superdiagonal,
and $C$ is diagonal with elements uniformly from $(-1,0)$.
Therefore, $A$ is symmetric positive semidefinite, $B$ is skew-symmetric,
and $C$ is symmetric negative definite, which is typical for this type of system.
The matrix $A$ is singular and the QEP has infinite eigenvalues.
Therefore, it seems appealing to exploit the homogeneous technique of Section~\ref{sec:homo}.
We take target $\tau = 80i$, and an exact LU preconditioner based on this target.
An eigenpair is considered converged if the residual is below $10^{-4}$.
All other parameters are as in Experiment~6.1.
We find 10 eigenpairs in 800 iterations, after
108, 109, 111, 115, 118, 143, 162, 176, 625, and 777 iterations, respectively.
Here, we see again the same pattern of first spending several iterations to obtain a good subspace,
then the quick detection of a number of eigenvalues, followed by
a new period of enriching the subspace before new eigenpairs are found.
}
\end{experiment}

\begin{experiment}
{\rm
For the next experiment, we take the largest cubic polynomial eigenvalue problem
$(\lambda^3 A_3 + \lambda^2 A_2 + \lambda A_1 + A_0)\,x = 0$
of the {\sf nlevp} toolbox \cite{nlevp}: the problem {\sf plasma\_drift}, with
coefficient matrices of size 512; see Figure~\ref{fig:2}.
We note that this spectrum is quite challenging, with close eigenvalue and eigenvalues
of high multiplicity.
Our target is $\tau = 0$, and as in the previous experiment we use an exact LU
preconditioner based on this target, so $LU = A_0$.
For the value extraction we use the two-dimensional minimum residual {\sf mr2} approach from \cite{HVo03}.
The other settings are the same as in Experiment~\ref{exp1}.
With $\eta = 0.1$, the Jacobi--Davidson method finds 19 eigenvalues in 200 outer iterations;
cf.~Fig.~\ref{fig:1}(c).
With respect to distance to the target, these are approximations to eigenvalues with index
1 through 12, 511, 512, 514, 14, 16, 515, and 510, respectively.
This ``alternating'' behavior is quite typical for iterative eigensolvers;
cf.~also \cite{HPl02,HKP05}.
The high indices can be explained by the fact that there are several eigenvalues
of high multiplicity close to the origin.
This illustrates that the selection method may work fine for problems with multiple
eigenvalues, as long as the computed eigenvalues are simple.
Other choices for $\eta$ result in 14 ($\eta = 0.01$),
10 ($\eta = 0.2$), and 11 ($\eta = 0.5$) eigenvalues.


\begin{figure}[!htbp]
\centering
  \includegraphics[scale=0.38]{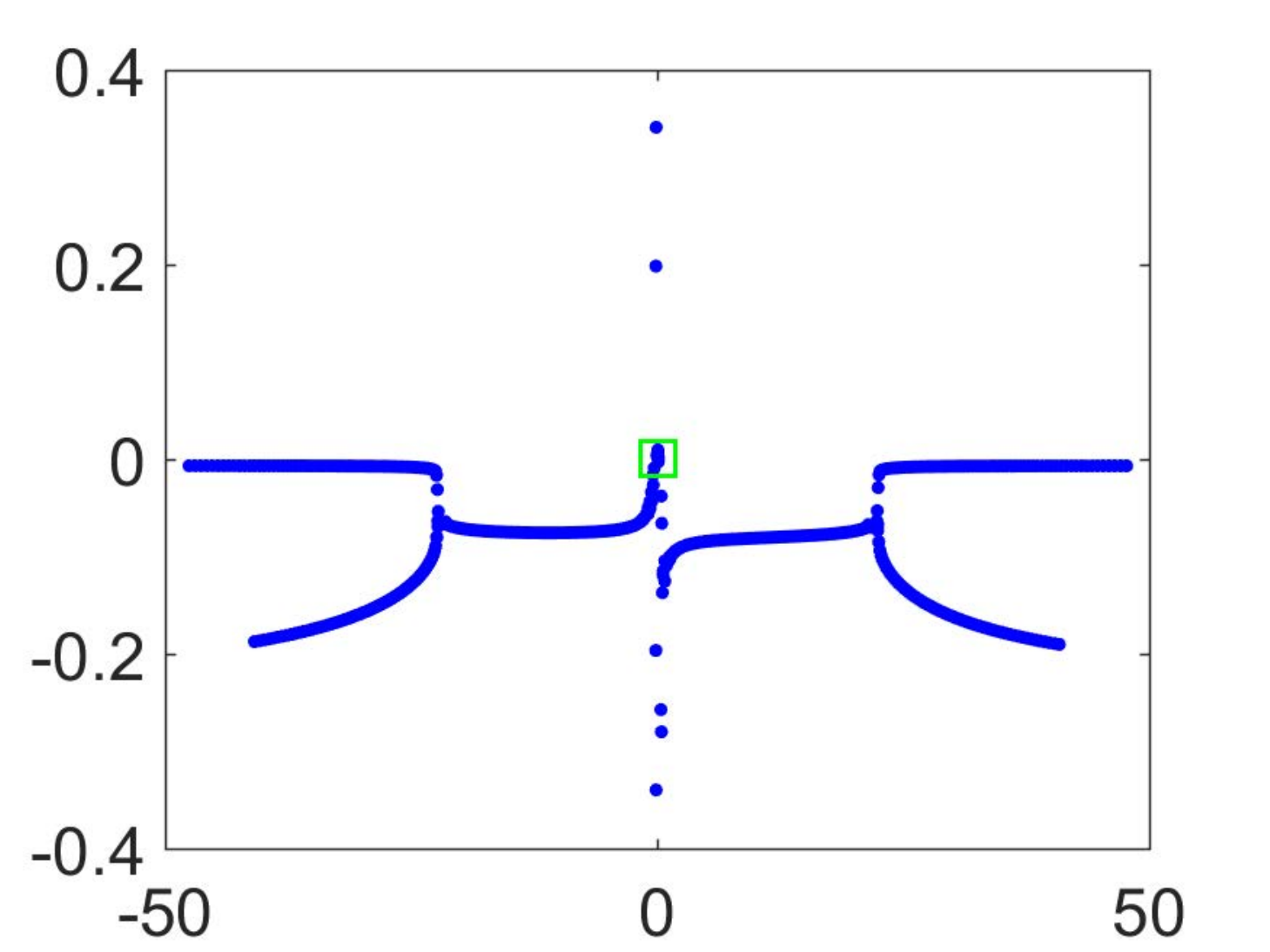} \
  \includegraphics[scale=0.38]{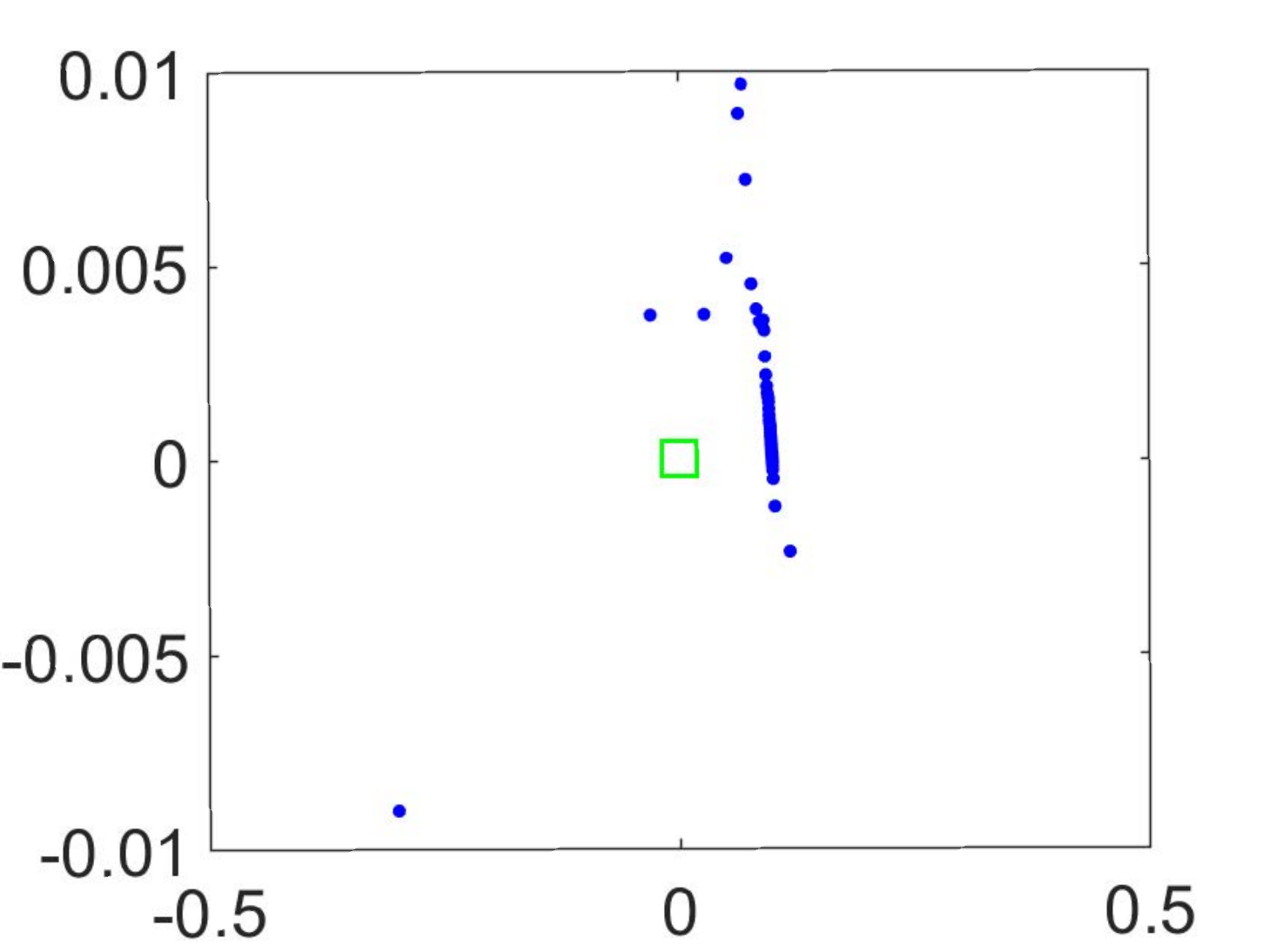}
  \includegraphics[scale=0.38]{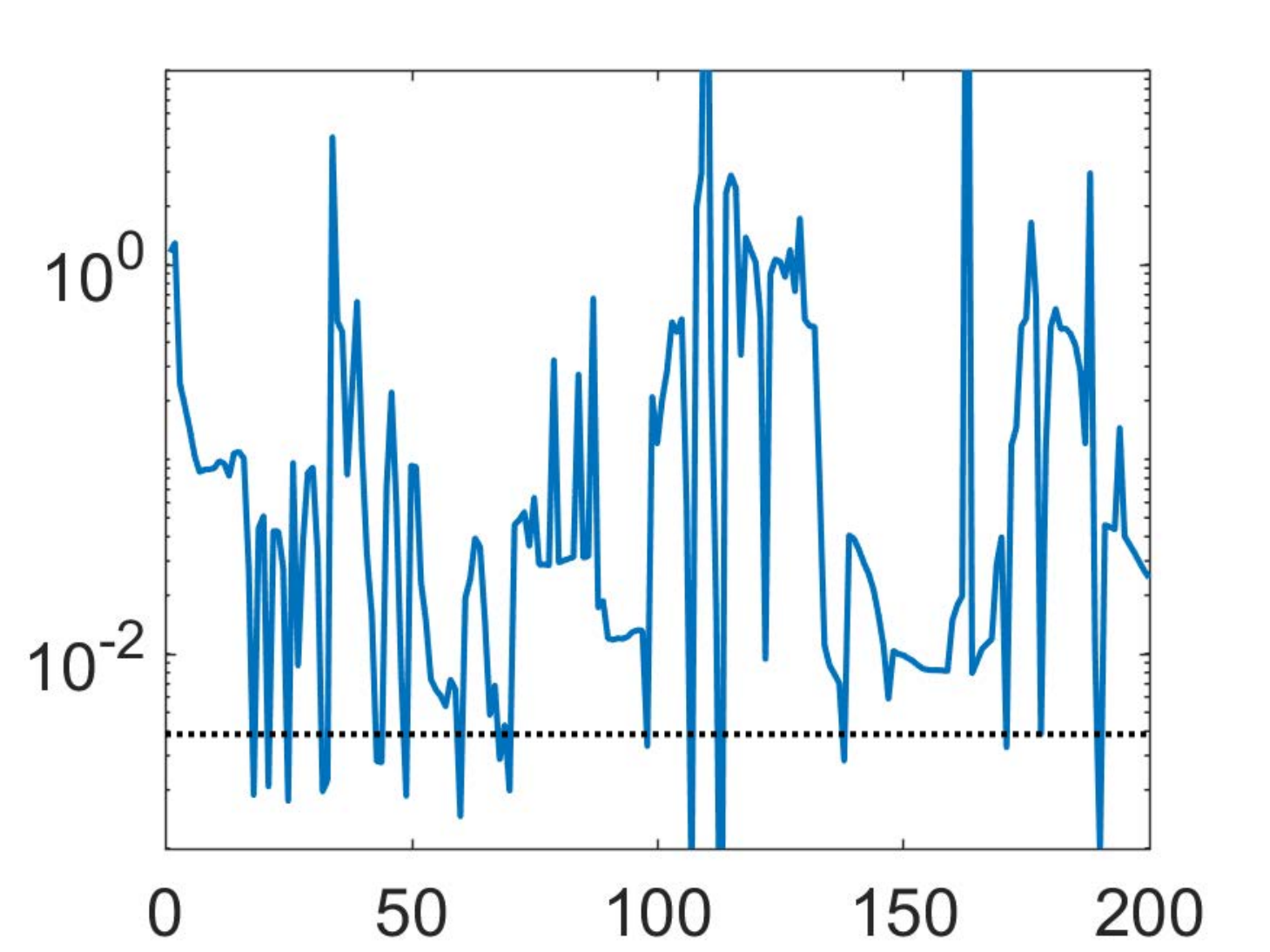}
\caption{(a) and (b): Spectrum and target of {\sf plasma\_drift};
(c) Convergence history of 19 converged eigenpairs.}
\label{fig:2}
\end{figure}

}
\end{experiment}

\begin{experiment}{\rm
We consider the 4-point boundary value problem
\begin{equation}\label{eq:stoss}
y''(x)+(\lambda + 2\mu \cos(x) +2\eta \cos(2x))\,y(x)=0,\quad y(0)=y(1)=y(2)=y(3)=0,
\end{equation}
where we seek
   $(\lambda,\mu,\eta)$ such that there exists a nonzero solution $y(x)$.
This problem can be decomposed into a 3-parameter eigenvalue problem that consists
of three 2-point boundary value problems of the form
\begin{equation}\label{eq:stoss3}
y_i''(x_i)+(\lambda + 2 \mu \cos(x_i) +2 \eta \cos(2x_i))\,y(x_i)=0,\quad
y_i(i-1)=y_i(i)=0
\end{equation}
for $i=1,2,3$.
A smooth function $y(x)$ that satisfies \eqref{eq:stoss}
can be constructed from the functions $y_1(x_1)$, $y_2(x_2)$, $y_3(x_3)$.
The 3-parameter eigenvalue problem \eqref{eq:stoss3} has the Klein oscillation property, which means that
for each triple of nonnegative integers $(m_1,m_2,m_3)$
there exist a triple of values $(\lambda,\mu,\eta)$ such that \eqref{eq:stoss} has a solution $y(x)$
that has $m_1$ zeros on interval $(0,1)$, $m_2$ zeros on $(1,2)$, and $m_3$ zeros on $(2,3)$.

We discretize \eqref{eq:stoss3} using the Chebyshev collocation on 200 points (cf.~\cite{HMMP19}),
which leads to an algebraic 3-parameter eigenvalue problem of the form
\begin{equation}
\label{eq:4point}
(A_i-\lambda B_i-\mu C_i-\eta D_i) \, x_i=0,\quad i=1,2,3.
\end{equation}
The solutions with indices $(j_1,j_2,j_3)$ such that $j_1+j_2+j_3$ is small correspond to eigenvalues
$(\lambda,\mu,\eta)$ close to $(0,0,0)$. To find eigenvalues close to the origin,
we apply the Jacobi--Davidson method, for
details see \cite{HMMP19}. We restrict the subspace
dimensions between 5 and 10 and solve the corresponding correction equations approximately by 10 steps of GMRES,
where we use the exact LU preconditioner based on the target, i.e., $A_j=L_jU_j$ for
$j = 1,2,3$.
The Jacobi--Davidson method returns 20 eigenvalues after performing 40 subspace updates. The first nine eigenvalues converged are provided
in Table \ref{tab:stoss} together with their indices, while the corresponding solutions $y(x)$ of \eqref{eq:stoss}
are illustrated in Figure \ref{fig:stoss}. Note that the indices in Table \ref{tab:stoss} confirm that the eigenvalues
converged are indeed the ones closest to the origin.

\begin{table}[htb]
\caption{The first 9 eigenvalues of the 4-point boundary value
problem \eqref{eq:stoss} retrieved by the Jacobi--Davidson method
with the origin as the target point.\label{tab:stoss}}
\begin{center}
{\footnotesize
\begin{tabular}{rrrrrr}
 \hline
\multicolumn{1}{c}{$\lambda$} & \multicolumn{1}{c}{$\mu$} & \multicolumn{1}{c}{$\eta$} & $j_1$ & $j_2$ & $j_3$ \\
\hline \rule{0pt}{2.3ex}%
 9.86960440 & $-0.00000000$ &  0.00000000   & 0 & 0 & 0\\
17.38523159 &  2.12527575   &$-12.73290564$ & 0 & 1 & 0\\
19.68377612 &  8.41730432   &  6.17620916   & 1 & 0 & 0\\
21.44695005 &$-10.07354787$ &  5.66869884   & 0 & 0 & 1\\
27.85962272 & 10.19955145   & $-6.02172707$ & 1 & 1 & 0\\
29.79885232 & $-8.32972041$ & $-6.38665167$ & 0 & 1 & 1\\
31.75591668 & $-1.66950908$ & 11.70626000   & 1 & 0 & 1\\
39.47841760 &  0.00000000   & $-0.00000000$ & 1 & 1 & 1\\
22.26126463 &  7.52057950   &$-38.93555514$ & 0 & 2 & 0\\
\hline
\end{tabular}}
\end{center}
\end{table}

\begin{figure}[htb]
\begin{center}
\includegraphics[scale=0.068]{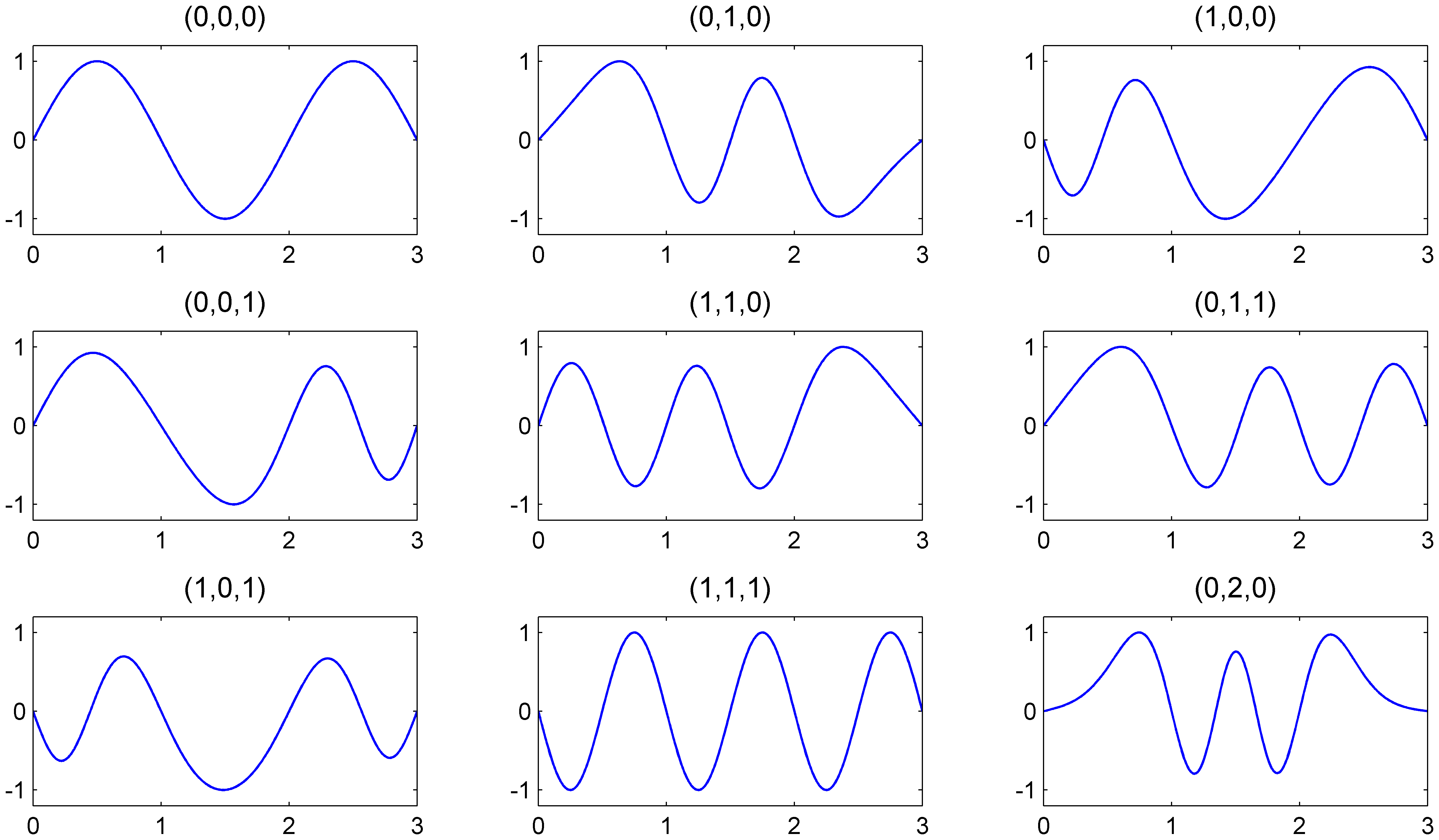}
\end{center}
\vspace{-0.5em}
\caption{First 9 solutions of \eqref{eq:stoss} corresponding to the eigenvalues
listed in Table \ref{tab:stoss}.}
\label{fig:stoss}
\vspace{-1.5em}
\end{figure}
}

\end{experiment}

\section{Conclusions}
\label{sec:concl}
We have presented several {\em selection criteria} for computing several eigenvalues for
nonlinear one-parameter, and linear and nonlinear multiparameter eigenvalue problems.
Selection means that an approximate eigenpair is picked
from candidate pairs that satisfy a certain suitable criterion.
The goal of this process is to steer the process away from already previously found pairs.
These criteria are easy to understand and implement, and also elegantly extend
to various types of eigenproblems.
We have also developed a divided difference and selection criterion
in homogeneous coordinates. This not only has the potential to handle infinite eigenvalues,
but also is a valuable alternative approach in itself.

The methods work directly on the original problem; no linearizations
(as for instance discussed in \cite{HMT13}) are necessary.
They require the computation of the left eigenvector, which implies some extra
costs for nonsymmetric problems. However, these additional costs are often
relatively small compared to the total costs. For certain problems with structure,
such as symmetric problems, the left eigenvectors come for free.
Also, left eigenvectors provide valuable information on the condition number
and reliability of the computed eigenvalues.

A main advantage of the selection techniques is that the search spaces
effectively may contain more useful vectors for the computations of
new eigenvectors.
Instead of locking, which keeps the converged vectors in the search space, the
search spaces can now be more fully used for new information.
Moreover, and also important for practical use, the selection criteria are relatively
easy to understand and implement compared with several existing approaches.

For the quadratic and polynomial (one-parameter) eigenvalue problem, the presented methods
are new, and a valuable alternative to other methods such as locking or block methods (cf.~\cite{Eff13});
a more detailed comparison can be found in Section~\ref{sec:compare}.
For linear and nonlinear {\em multiparameter} eigenvalue problems, we would like to stress the fact
that the presented selection techniques seem to be the {\em only} realistic option.
While for multiparameter eigenvalue problems we already proposed
selection criteria in the past, in this paper we propose updated
and less strict criteria of the type \eqref{mepnew} instead of \eqref{mepstrict}.

The approach can also be applied to general nonlinear eigenproblems $F(\lambda)\bx=\zero$,
as long as we can evaluate the derivative $F'(\lambda)$ and the divided difference
$F[\lambda,\mu]$.

We note that for challenging problems, it sometimes is not easy to find more
than about 10 eigenpairs with the selection criterion.
Reasons for this may be that a larger part of the search space is occupied by already
detected eigenvectors, or that the preconditioner is of lower quality for the new eigenvalues.
In this case, it may be a good idea to start a new process with a modified target and preconditioner.

Code for the proposed techniques for one-parameter eigenvalue problems is available on request;
for multiparameter eigenvalue problems, we refer to \cite{PleMEP}.



\bigskip\noindent{\bf Acknowledgments: }
We thank Daniel Kressner for helpful discussions and two expert referees
for useful comments.


\def\cprime{$'$}

\end{document}